\newcommand{\showmorestuff}{no}    
\newcommand{\Z}{{\mathbb Z}}
\newcommand{\cS}{{\mathcal S}}
\renewcommand{\o}[1]{\overline{#1}}
\newcommand{\oa}{{\overline a}}
\newcommand{\ob}{{\overline b}}
\newcommand{\os}{{\overline s}}
\newcommand{\oA}{{\overline A}}
\newcommand{\oB}{{\overline B}}
\newcommand{\oF}{{\overline F}}
\newcommand{\oG}{{\overline G}}
\newcommand{\oH}{{\overline H}}
\newcommand{\oK}{{\overline K}}
\newcommand{\oS}{{\overline S}}
\newcommand{\dfc}{\mathsf{d}}
\newcommand{\Dfc}{\mathsf{D}}
\newcommand{\tc}{{\widetilde c}}
\newcommand{\tA}{{\widetilde A}}
\newcommand{\tG}{{\widetilde G}}
\newcommand{\tH}{{\widetilde H}}
\newcommand{\tK}{{\widetilde K}}
\newcommand{\tL}{{\widetilde L}}
\newcommand{\tP}{{\widetilde P}}
\newcommand{\alp}{\alpha}
\newcommand{\del}{\delta}
\newcommand{\eps}{\varepsilon}
\newcommand{\kap}{\varkappa}
\newcommand{\sig}{\sigma}
\renewcommand{\phi}{\varphi}
\newcommand{\Del}{\Delta}
\newcommand{\longc}{,\dotsc,}
\newcommand{\longp}{+\dotsb+}
\newcommand{\longe}{=\dotsb=}
\newcommand{\est}{\varnothing}
\newcommand{\sbs}{\subset}
\newcommand{\seq}{\subseteq}
\newcommand{\stm}{\setminus}
\newcommand{\<}{\langle}
\renewcommand{\>}{\rangle}
\newtheorem{claim}{Claim}
\newtheorem{lemma}{Lemma}
\newtheorem{theorem}{Theorem}
\newtheorem{corollary}{Corollary}
\newtheorem{proposition}{Proposition}
\theoremstyle{remark}
\newtheorem{remark}{Remark}
\newtheorem{example}{Example}
\newcommand{\refc}[1]{\ref{c:#1}}
\newcommand{\refl}[1]{\ref{l:#1}}
\newcommand{\refm}[1]{\ref{m:#1}}
\newcommand{\reft}[1]{\ref{t:#1}}
\newcommand{\refp}[1]{\ref{p:#1}}
\newcommand{\refr}[1]{\ref{r:#1}}
\newcommand{\refs}[1]{\ref{s:#1}}
\newcommand{\refb}[1]{\cite{b:#1}}
\newcommand{\refe}[1]{\eqref{e:#1}}
\title[Small doubling in moderate-torsion groups]%
  {Small doubling \\ in groups with moderate torsion}
\author{Vsevolod F. Lev}
\email{seva@math.haifa.ac.il}
\address{Department of mathematics, the University of Haifa at Oranim,
  Tivon 36006, Israel}
\begin{document}
\baselineskip=16pt

\begin{abstract}
We determine the structure of a finite subset $A$ of an abelian group given
that $|2A|<3(1-\eps)|A|,\ \eps>0$; namely, we show that $A$ is contained
either in a ``small'' one-dimensional coset progression, or in a union of
fewer than $\eps^{-1}$ cosets of a finite subgroup.

The bounds $3(1-\eps)|A|$ and $\eps^{-1}$ are best possible in the sense
that none of them can be relaxed without tightening another one, and the
estimate obtained for the size of the coset progression containing $A$ is
sharp.

In the case where the underlying group is infinite cyclic, our result
reduces to the well-known \emph{Freiman's $(3n-3)$-theorem}; the former
thus can be considered as an extension of the latter onto arbitrary abelian
groups, provided that there is ``not too much torsion involved''.
\end{abstract}

\maketitle

\section{Introduction and summary of results}\label{s:intro}

For subsets $A$ and $B$ of an additively written abelian group, by $A+B$ we
denote the set of all group elements representable as $a+b$ with $a\in A$ and
$b\in B$. We abbreviate $A+A$ as $2A$ and define the \emph{doubling
coefficient} of a finite, nonempty set $A$ to be the quotient $|2A|/|A|$.

It is a basic folklore fact that if $A$ is a finite set of integers, then
$|2A|\ge 2|A|-1$; more generally, if $A$ and $B$ are finite nonempty subsets
of a torsion-free abelian group, then $|A+B|\ge|A|+|B|-1$. An extension of
this fact onto general abelian groups with torsion is a deep result due to
Kneser, discussed in Section~\refs{kneser}.

In another direction, Freiman~\refb{f} has established the structure of
integer sets $A$ satisfying $|2A|\le 3|A|-3$; that is, roughly, sets with the
doubling coefficient up to $3$. This result, commonly referred to as
\emph{Freiman's $(3n-3)$-theorem}, along with its generalizations onto
distinct set summands, can be found in any standard additive combinatorics
monograph; see, for instance, \cite[Theorem~1.13]{b:n}.

It is a notoriously difficult open problem to merge together the results of
Kneser and Freiman establishing the structure of sets with the doubling
coefficient less than $3$ in abelian groups with torsion. This paper is
intended as a step towards the solution of this problem.

Our main result shows that a small-doubling set is either contained in the
union of a small number of cosets of a finite subgroup, or otherwise is
densely contained in a coset progression.
\begin{theorem}\label{t:big}
Let $A$ be a finite subset of an abelian group $G$ such that $A$ cannot be
covered with fewer than $n$ cosets of a finite subgroup of $G$, for some real
$n>0$. If $|2A|<3\big(1-\frac1n\big)|A|$, then there exist an arithmetic
progression $P\seq G$ of size $|P|\ge 3$ and a finite subgroup $K\le G$ such
that $|P+K|=|P||K|$, $A\seq P+K$, and $(|P|-1)|K|\le|2A|-|A|$.
\end{theorem}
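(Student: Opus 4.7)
The plan is to combine Kneser's theorem with the classical Freiman $(3n-3)$-theorem via a quotient-and-lift argument. Let $K\le G$ be the stabilizer of $2A$, write $\pi\colon G\to G/K$, and set $\oA=\pi(A)$. Since $2A$ is a union of $K$-cosets, $|2A|=|K||2\oA|$ and $|A+K|=|K||\oA|$, while Kneser's theorem yields $|2A|\ge 2|A+K|-|K|$. The hypothesis that $A$ meets at least $n$ cosets of any finite subgroup gives $|\oA|\ge n$. Substituting into $|2A|<3(1-1/n)|A|$ and using $|A|\le|A+K|$, one derives $|2\oA|<3|\oA|-3$, bringing $\oA$ into the scope of a Freiman-type conclusion in the quotient.

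The next step, which I expect to be the main obstacle, is to show that $\oA$ is contained in an arithmetic progression $\o{P}\seq G/K$ of length $|\o{P}|\le|2\oA|-|\oA|+1$. By the choice of $K$, the stabilizer of $2\oA$ in $G/K$ is trivial, so $\oA$ carries no residual periodicity; this non-degeneracy should permit setting up a Freiman $2$-isomorphism between $\oA$ and a subset of $\Z$, from which the classical $(3n-3)$-theorem yields the desired progression. Transferring a structural theorem from $\Z$ to an arbitrary torsion group on the basis of triviality of the stabilizer alone is delicate, and will likely require either a direct combinatorial analysis or induction on a suitable complexity parameter, together with a separate treatment of small values of $|\oA|$.

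Finally, writing $\o{P}=\{\o{p}+i\o{d}:0\le i<\ell\}$, I would lift $\o{p},\o{d}$ to elements $p,d\in G$ and take $P=\{p+id:0\le i<\ell\}$. The distinctness of the cosets $\o{p}+i\o{d}$ in $G/K$ forces $|P+K|=|P||K|$, and $A\seq P+K$ by construction. The lower bound $|P|\ge 3$ is automatic from triviality of the stabilizer of $2\oA$, which rules out $|\oA|\le 2$ under $|2\oA|<3|\oA|-3$ (in either degenerate case $2\oA$ would itself be a coset of a nontrivial subgroup). Multiplying the Freiman bound $|\o{P}|\le|2\oA|-|\oA|+1$ by $|K|$ and using $|A|\le|A+K|=|\oA||K|$ then yields
\[
 (|P|-1)|K| \;=\; |P+K|-|K| \;\le\; |2A|-|A+K| \;\le\; |2A|-|A|,
\]
which is the final inequality claimed in the theorem.
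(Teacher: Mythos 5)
Your plan founders on the choice of $K$. You take $K:=\pi(2A)$, the stabilizer of $2A$, quotient, and aim to rectify $\oA$ into $\Z$; but the subgroup $K$ in the theorem's conclusion need not be the stabilizer of $2A$, and in fact can be strictly larger. Consider $G=\Z\oplus(\Z/2\Z)$ and
$$A=\{(0,0),\,(1,0),\,(1,1),\,(2,0),\,(2,1)\}.$$
Here $|A|=5$, $n=3$, and one computes $2A=\{(0,0)\}\cup\{1,2,3,4\}\times\{0,1\}$, so $|2A|=9<10=3(1-\tfrac13)|A|$. The stabilizer of $2A$ is \emph{trivial} (since $(0,0)\in 2A$ but $(0,1)\notin 2A$), so your $K$ is $\{0\}$ and $\oA=A$. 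You would then need $A$ to lie in an arithmetic progression of length at most $|2A|-|A|+1=5$ in $G$; but $A$ has five elements and is not itself an arithmetic progression, so no such $P$ exists. The theorem holds for this $A$ with $K=\Z/2\Z$ and $P=\{(0,0),(1,0),(2,0)\}$, giving $(|P|-1)|K|=4=|2A|-|A|$. So the correct $K$ is strictly larger than $\pi(2A)$, and your construction cannot produce it.

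The deeper problem is the appeal to a Freiman $2$-isomorphism into $\Z$. In the example above $\oA=A$ contains the relation $(1,0)+(1,0)=(1,1)+(1,1)$ with $(1,0)\ne(1,1)$, which no $2$-isomorphism into $\Z$ can respect; aperiodicity of $2\oA$ gives no rectifiability. You acknowledge that ``transferring a structural theorem from $\Z$ to an arbitrary torsion group'' is delicate and will ``require a direct combinatorial analysis or induction,'' but this is not a gap to be filled by routine work --- it is essentially the entire content of the theorem. The paper's actual route is to reduce Theorem~\reft{big} to the model case $G=\Z\oplus H$ (Theorem~\reft{small}) via a compression $\Z^r\to\Z$ that is Freiman-isomorphic on $A$ and $2A$, and then to prove Theorem~\reft{small} by a careful induction on $|H|$ using Kneser's theorem and a notion of deficiency; crucially, the subgroup $K$ there is produced dynamically by the induction, not read off from the stabilizer of $2A$.
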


\begin{remark}
The equality $|P+K|=|P||K|$ of Theorem~\reft{big} and Theorem~\reft{small}
below is an easy corollary of the other assertions: if it fails, then there
are two elements of $P$ that fall into the same $K$-coset; hence, $P+K$ is a
coset of a finite subgroup; therefore $n\le 1$, which is inconsistent with
the assumption $|2A|<3(1-1/n)|A|$. For this reason, we ignore the equality in
question and never get back to it till the end of the paper.
\end{remark}

\begin{remark}
Letting $\tau=|2A|/|A|$, the conclusion $(|P|-1)|K|\le|2A|-|A|$ can be
rewritten as $|P+K|\le(\tau-1)|A|+|K|$; the meaning of this estimate is that
$A$ is dense in $P+K$.
\end{remark}

We derive Theorem~\reft{big} from the following, essentially equivalent,
result.
\begin{theorem}\label{t:small}
Suppose that the abelian group $G$ has the direct sum decomposition
$G=\Z\oplus H$ with $H<G$ finite. Let $A\seq G$ be a finite set, and let $n$
be number of elements of the image of $A$ under the projection $G\to\Z$ along
$H$. If $|2A|<3\big(1-\frac1n\big)|A|$, then there exist an arithmetic
progression $P\seq G$ of size $3\le |P|\le(\tau-1)n+1$ and a subgroup $K\le
H$ such that $|P+K|=|P||K|$, $A\seq P+K$, and $(|P|-1)|K|\le|2A|-|A|$.
\end{theorem}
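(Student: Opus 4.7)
The plan is to project onto the $\Z$-factor, identify the hidden subgroup $K\le H$ by a Kneser analysis of extremal fiber sumsets, and then invoke Freiman's $(3n-3)$-theorem on the projection. Let $\pi\colon G\to\Z$ be the projection, write $\bar A=\pi(A)=\{x_1<\dotsb<x_n\}$, and denote fibers $A_i:=A\cap\pi^{-1}(x_i)$ of sizes $a_i=|A_i|$, so $|A|=a_1\longp a_n$; after translation I may assume $x_1=0$ and $x_n=L$. Any candidate progression $P$ must have nonzero $\Z$-component to its common difference (otherwise $P+K$ lies in a single $H$-coset, contradicting $n\ge 2$), so $\pi(P)$ is an arithmetic progression of length $|P|$ in $\Z$ containing $\bar A$. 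Thus the conclusion splits into a ``horizontal'' claim that $\bar A$ fits into a short AP of $\Z$, and a ``vertical'' claim that a single subgroup $K\le H$ captures the structure of every fiber.

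Set $K:=\mathrm{Stab}(A_1+A_n)\le H$, the stabilizer inside $H$ of the sum of the extremal fibers. Kneser's theorem in $H$ then gives $|A_1+A_n|\ge a_1+a_n-|K|$, and an analogous family of fiberwise Kneser bounds applied to the $2n-1$ extremal sums $x_1+x_j$ ($j=1\longc n$) and $x_i+x_n$ ($i=2\longc n$) should aggregate into an estimate of the shape $|2A|\ge|A|+|K|\bigl(|2\bar A|-n\bigr)$, alongside the supporting statement that each fiber $A_i$ is confined to a single coset of $K$ (so $|A|\le n|K|$). Reading this against the hypothesis $|2A|<3(1-\tfrac1n)|A|$ forces $|2\bar A|\le 3n-4$, so Freiman's classical $(3n-3)$-theorem delivers an arithmetic progression $\bar P\sbs\Z$ with $\bar A\seq\bar P$ and $|\bar P|\le|2\bar A|-n+1$. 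Lift $\bar P$ to $P\seq G$ by choosing the $H$-component of the common difference so that $A\seq P+K$; the bound $|A|\le n|K|$ then converts $|\bar P|\le|2\bar A|-n+1$ into $|P|\le(\tau-1)n+1$, and the density estimate $(|P|-1)|K|\le|2A|-|A|$ follows by unwinding the Kneser inequality above.

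The main obstacle is the Kneser bookkeeping leading to the two pillars $|2A|\ge|A|+|K|(|2\bar A|-n)$ and $|A|\le n|K|$. Combining slice-level Kneser bounds across all $2n-1$ extreme sums so that the $|K|$ contributions add correctly, while simultaneously proving that \emph{every} intermediate fiber $A_i$ (not just $A_1$ and $A_n$) lies in a single $K$-coset, is the crux of the argument. Configurations in which $\mathrm{Stab}(A_1+A_n)$ fails to control all intermediate fibers will presumably be handled by an iterative reduction through subquotients, or by an induction on $|H|$ whose base case $a_1\longe a_n=1$ collapses to Freiman's original theorem in $\Z$; a secondary subtlety is ensuring that the lifted AP $P$ can be realized as a genuine arithmetic progression in $G$ (not merely in the quotient) with the correct step, which is what justifies the equality $|P+K|=|P||K|$.
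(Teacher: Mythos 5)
Your plan diverges fundamentally from the paper's proof, and it has a concrete gap that I don't think can be patched without effectively rebuilding the paper's argument.

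The central problem is your choice $K:=\mathrm{Stab}(A_1+A_n)$, the stabilizer of the sum of the two extremal fibers. This can be far too small. Take $H=\Z/m\Z$ with $m$ large, and
\[
  A=\{(0,0)\}\cup\bigl(\{1,\dotsc,n-1\}\times H\bigr)\cup\{(n,0)\},
\]
so the projection onto $\Z$ has $n+1$ points. Here $A_1+A_n=\{(n,0)\}$, so your $K$ is the zero subgroup, and both of your ``pillars'' fail: the middle fibers occupy all of $H$, not a single $K$-coset, and $|A|\approx (n-1)m$ is nowhere near $(n+1)|K|=n+1$. Yet one checks that for $n\ge 5$ and $m$ large this $A$ satisfies the small-doubling hypothesis (its doubling coefficient tends to $2+\tfrac1{n-1}$), and the correct subgroup for the conclusion is $K=H$ itself, together with $P=\{0,1,\dotsc,n\}$. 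So the subgroup you want is not visible in $A_1+A_n$ alone; it has to be extracted from the global structure of $2A$. Your own last paragraph anticipates this (``Configurations in which $\mathrm{Stab}(A_1+A_n)$ fails to control all intermediate fibers will presumably be handled by an iterative reduction\dots''), but that is precisely the missing engine of the proof, not a detail.

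Even setting aside the choice of $K$, the aggregation step $|2A|\ge|A|+|K|\bigl(|2\bar A|-n\bigr)$ is not a straightforward summation of fiberwise Kneser bounds: each sum $A_i+A_j$ has its own stabilizer, and there is no a priori reason a single $K$ participates in all of them. Equally, the claim that \emph{every} intermediate fiber lies in one $K$-coset is a \emph{consequence} of the theorem, not something one gets for free on the way; the paper's final case (labeled $N\ge 2$, $\oK=\oF$) only establishes this for the two extremal fibers (Claim~\ref{m:1101c}) and controls the middle ones via a weaker periodicity statement ($2A^\circ+K=2A^\circ$, Claim~\ref{m:new}), which already takes real work.

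The paper's approach is genuinely different and is designed to dodge exactly these obstacles. Rather than project onto $\Z$ along $H$, it quotients by $\Del=\<\del\>$ where $\del$ is a chosen element of the top fiber --- ``wrapping'' the two extremal fibers onto one another --- and shows via Lemma~\ref{l:A0Al} and Corollary~\ref{c:3A2A} that $|2\oA|<2|\oA|-1$. Kneser's theorem in $G/\Del$ then produces a \emph{nontrivial} stabilizer $\oF=\pi(2\oA)$ globally, and $K:=F\cap H$ is read off from that. The argument is then propelled by an induction on $|H|$ via the deficiency framework of Section~\ref{s:def}: whenever a nonzero subgroup $L\le H$ with $\Dfc(2A,L)\le\Dfc(A,L)$ is found, one passes to $G/L$. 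The residual cases ($N=0$, $N=1$, $N\ge2$ with $\oK\lneq\oF$ or $\oK=\oF$) are handled with Lemma~\ref{l:pairs}, Proposition~\ref{p:N=1}, and Corollary~\ref{c:BPdist}. Your route, by projecting to $\Z$, discards the torsion information that the quotient by $\Del$ is specifically built to retain.
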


The equality $G=\Z\oplus H$ means that $G$ is the direct sum of its infinite
cyclic subgroup and the subgroup $H$. To simplify the notation, the former is
identified with the group of integers.

The following example shows that Theorem~\reft{small} is sharp in the sense
that the assumption $|2A|<3(1-\frac1n)|A|$ cannot be relaxed, and the
conclusion $(|P|-1)|K|\le|2A|-|A|$ cannot be strengthened.
\begin{example}\label{x:cylinder}
Let $P:=[0,l]$ and $A:=\big([0,n-2]\cup\{l\}\big)+K$, where $K\le H$, and
$l\ge n-1\ge 2$ are integers; thus, $|A|=n|K|$. If $l>2n-3$, then
$|2A|=(3n-3)|K|=3\big(1-n^{-1}\big)|A|$, and $A$ fails to have the structure
described in Theorem~\reft{bp} as $|2A|-|A|=(2n-3)|K|<(|P|-1)|K|$. Thus, to
conclude that a set $A\seq\Z\oplus H$ with $|2A|<3(1-\eps)|A|$ is densely
contained in a coset progression, one needs to assume that $A$ cannot be
covered with fewer than $\eps^{-1}$ cosets of a finite subgroup (or make some
other extra assumption).

On the other hand, if $l\le 2n-3$, then $|2A|=(l+n)|K|$; therefore,
$|2A|-|A|=(|P|-1)|K|$.
\end{example}

\begin{remark}\label{r:rem1}
The inequality $|P|\ge 3$ of Theorem~\reft{small} follows in fact
automatically from the other assertions of the theorem. We cannot have
$|P|=1$ because this would lead to $n=1$, and consequently to $|2A|<0$. We
also cannot have $|P|=2$ because this would result in $n=2$ and
$|2A|<\frac32\,|A|$. The latter, in its turn, is known to imply (see, for
instance, Lemma~\refl{1.5} below) that $A$ is contained in a coset of a
finite subgroup of $G$; hence, in an $H$-coset. This, however, contradicts
the equality $n=2$. The same applies to Theorem~\reft{big}.

Similarly, the upper-bound estimate $|P|\le(\tau-1)n+1$ in
Theorem~\reft{small} follows from
  $$ (\tau-1)|A| = |2A|-|A| \ge (|P|-1)|K| \ge (|P|-1)\,\frac{|A|}n, $$
and we thus can safely forget this estimate for the rest of the paper.
\end{remark}

\begin{remark}
In the particular case where $H$ is trivial, and $A$ is a subset of the
infinite cyclic group, Theorem~\reft{small} is equivalent to Freiman's
classical \emph{$(3n-3)$-theorem}, see~\refb{f} or \cite[Theorem~1.13]{b:n}.
Theorem~\reft{small} thus can be considered as an extension of the
$(3n-3)$-theorem onto the groups with torsion.
\end{remark}

\begin{remark}
As a corollary of Theorem~\reft{small}, for any finite set $A\seq\Z\oplus H$,
denoting by $n$ the size of the projection of $A$ onto $\Z$ along $H$, we
have $|2A|\ge(2-\frac1n)|A|$. This follows by letting $\tau:=|2A|/|A|$ and
observing that
  $$ (\tau-1)|A| = |2A|-|A| \ge (|P|-1)|K|
                                \ge (n-1)|K| \ge \Big(1-\frac1n\Big) |A|. $$
We remark that, while the resulting estimate $|2A|\ge(2-\frac1n)|A|$ may not
be completely trivial, it is not particularly deep either, and can be proved
independently of the theorem, with a simple combinatorial reasoning in the
spirit of the proof of Lemma~\refl{A0Al} (Section~\refs{basic}).
\end{remark}

It should be possible to use our method to treat sumsets of the form $A+B$
with the distinct set summands, and in particular to prove analogues of
Theorems~\reft{big} and~\reft{small} for the difference sets $A-A$.

Theorem~\reft{small} can be compared against the following result of
Balasubramanian and Pandey, which is an elaboration on an earlier result of
Deshouillers and Freiman~\cite[Theorem~2]{b:df}.
\begin{theorem}[Balasubramanian-Pandey~{\cite[Theorem~5]{b:bp}}]\label{t:bp}
Let $d\ge 2$ be an integer and suppose that $A\seq\Z\oplus(\Z/d\Z)$ is a
finite set with $|2A|<2.5|A|$. For $z\in\Z$, let $A_z:=A\cap(z+\Z/d\Z)$, and
let $B:=\{z\in\Z\colon A_z\ne\est\}$. If $|B|\ge 6$ and $\gcd(B-B)=1$, then
there exists a subgroup $K\le\Z/d\Z$ and elements $x,y\in\Z/d\Z$ such that,
letting $l:=\max B-\min B$, we have
\begin{itemize}
\item[i)]   $A\seq\{(b,bx+y)\colon b\in B\}+K$;
\item[ii)]  there exists $b\in B$ with $|A_b|\ge\frac23|K|$;
\item[iii)] $l|K|\le|2A|-|A|$.
\end{itemize}
\end{theorem}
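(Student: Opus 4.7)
The plan is to exploit the tight doubling $|2A|<2.5|A|$ to extract structure at two levels: first on the projection $B=\pi(A)\seq\Z$, then on the fibers $A_b\seq\Z/d\Z$, and finally to reconcile the two via an affine relation $b\mapsto bx+y$.

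I begin by passing from $A$ to $B$. Since $2A$ projects to $2B$, the inequality $|2A|<2.5|A|$ combined with $|A|\le|B|\max_b|A_b|$ and $|2A|\ge\sum_{c\in 2B}|(2A)_c|$ should give, after a Pl\"unnecke-type slicing that uses the inclusion $A_b+A_{b'}\seq(2A)_{b+b'}$, a bound of the form $|2B|\le 3|B|-4$. With $|B|\ge 6$ and $\gcd(B-B)=1$ in hand, Freiman's classical $(3n-3)$-theorem then places $B$ inside the interval $[\min B,\max B]$ of length $l$, with $|B|$ close to $l+1$. For each pair $b,b'\in B$ I apply Kneser's theorem to $A_b+A_{b'}\seq(2A)_{b+b'}$; the tightness of the global doubling budget should force the Kneser stabilizers to coincide with a single finite subgroup $K\le\Z/d\Z$, and each fiber $A_b$ to be contained in one $K$-coset $\phi(b)+K$, with the compatibility $\phi(b)+\phi(b')\equiv\phi(b+b')\pmod K$ whenever $b+b'$ admits a unique representation in $B+B$.

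The affine structure in (i) then follows from $\gcd(B-B)=1$: choosing $b_0,b_1\in B$ with $b_1-b_0=1$, one sets $x:=\phi(b_1)-\phi(b_0)$ and $y:=\phi(b_0)-b_0 x$, and propagates the definition to the remaining elements of $B$ using the compatibility relation. The density estimate (iii) is obtained by expanding
$$ |2A|\ge\sum_{c\in 2B}|(2A)_c|\ge|2B||K|, $$
since each nonempty sumset fiber is $K$-invariant, together with $|2B|\ge|B|+l$. For (ii), if every fiber satisfied $|A_b|<\tfrac23|K|$ then $|A|<\tfrac23|B||K|$, which combined with (iii) and the bound $|B|\ge l/2$ extracted above would violate the $2.5|A|$ doubling threshold.

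The main obstacle is the coset-alignment step: showing that every fiber $A_b$ lies in a single coset of one common subgroup $K$, and that the chosen cosets arrange themselves along an affine function of $b$. This is where both hypotheses $|B|\ge 6$ and $\gcd(B-B)=1$ must be fully exploited, and where the strict threshold $2.5|A|$ appears essential---a looser doubling would permit the cosets to drift, producing cylinder-type configurations as in Example~\ref{x:cylinder}.
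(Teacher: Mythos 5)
This is a cited result, not one proved in the paper: Theorem~\reft{bp} is attributed to Balasubramanian and Pandey~\cite[Theorem~5]{b:bp}, and the paper imports it without proof. There is therefore no in-paper argument to compare your sketch against; I can only assess the sketch on its own terms, and on those terms it has several genuine gaps.

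First, the bound $|2B|\le 3|B|-4$ that you need in order to invoke Freiman's $(3n-3)$-theorem for the projection $B$ is not derived. From $|2A|<2.5|A|$ and the inclusions $A_b+A_{b'}\seq(2A)_{b+b'}$ one readily gets $|B|\cdot\max_b|A_b|+(|2B|-|B|)\le|2A|<2.5|B|\cdot\max_b|A_b|$, which only yields $|2B|<|B|+1.5|B|\cdot\max_b|A_b|$ --- nowhere near linear in $|B|$ unless the fibers are already known to be small. You would need a genuinely different slicing argument here, and it is not clear that $B$ alone has small doubling. Second, and more seriously, you explicitly flag the ``coset-alignment step'' (a single subgroup $K$ working for all fibers, with the cosets arranged affinely in $b$) as the main obstacle and leave it unresolved; this \emph{is} the content of the theorem, not a loose end, and nothing in the rest of the sketch supplies it. Third, your derivation of (ii) by contradiction does not close: assuming $|A_b|<\frac23|K|$ for all $b$ gives $|A|<\frac23|B||K|$, and combined with (iii) and $|2A|<2.5|A|$ this only yields $l<|B|\le l+1$, i.e.\ $l=|B|-1$ --- no contradiction. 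In fact the paper itself observes (in the discussion following the statement) that the averaging route only produces the weaker bound $\frac23\big(1-\frac1{l+1}\big)|K|$, and the paper's own Theorem~\reft{propo} is introduced precisely because (ii) does \emph{not} follow from (i) and (iii) by a soft counting argument. So the final step of your sketch is reproducing a known failure mode that the paper explicitly calls out.
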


Balasubramanian and Pandey also include into the statement the estimate
$l<\frac32|B|$, but in fact this estimate follows easily from i) and iii):
  $$ l|K| < 2.5|A|-|A| = \frac32\,|A| \le \frac32\,|B||K|. $$

In the same vein, i) and iii) imply an estimate which is only slightly weaker
than ii): namely, by iii) we have $l|K|\le(\tau-1)|A|$; therefore, by
averaging, there exists an element $b\in B$ with
  $$ |A_b| \ge \frac{|A|}{|B|}
         \ge \frac{(\tau-1)|A|}{(\tau-1)(l+1)}
                     \ge \frac l{l+1}\frac{|K|}{\tau-1}
                           \ge \frac23\,\Big(1-\frac{1}{l+1}\Big)\, |K|. $$
To match the Balasubramanian-Pandey estimate $\max_{b\in
B}|A_b|\ge\frac23|K|$, we prove in Section~\refs{deduct} the following
theorem showing (subject to Theorem~\reft{small}) that if $n$ is sufficiently
large, then there exists a $K$-coset containing at least $\frac{|K|}{\tau-1}$
elements of $A$.
\begin{theorem}\label{t:propo}
Suppose that $G$, $H$, $A$, $n$, $P$, and $K$ are as in Theorem~\reft{small},
and let $\tau:=|2A|/|A|$. If $n\ge\frac{4\tau-6}{(\tau-2)(3-\tau)}$, then
there exists a $K$-coset containing at least $\frac{|K|}{\tau-1}$ elements of
$A$.
\end{theorem}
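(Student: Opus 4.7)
The plan is to argue by contradiction. Setting $m:=|K|/(\tau-1)$, I will suppose that $|A\cap(p+K)|<m$ for every $p\in P$, and write $a_p:=|A\cap(p+K)|$ for brevity.

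The first step pins down $|P|$. Since $K\le H$ and $A\seq P+K$, the projection $\pi\colon G\to\Z$ satisfies $\pi(A)\seq\pi(P)$; and since the common difference of $P$ cannot lie in $H$ (otherwise the $\Z$-projection of $A$ would be a singleton, forcing $n=1$ and contradicting $n\ge 3$), one has $|\pi(P)|=|P|\ge n$. The failure assumption yields $|A|<|P|\,m$, while Theorem~\reft{small} supplies $(|P|-1)|K|\le(\tau-1)|A|$. Combining these forces $|P|=n$, every $a_p$ positive, and the squeeze $(n-1)m\le|A|<nm$.

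The main step is a sharpened lower bound on $|2A|$. Writing $A_i:=A\cap(p_0+id+K)$ and decomposing $|2A|=\sum_{k=0}^{2n-2}b_k$ along the $2n-1$ cosets of $K$ inside $2P+K$, each slice satisfies $b_k\ge|A_i+A_j|$ for any $i+j=k$ with $0\le i,j\le n-1$. Kneser's theorem applied inside the ambient coset gives $|A_i+A_j|\ge a_i+a_j-s_{ij}$, where $s_{ij}$ is the order of the stabilizer of $A_i+A_j$ in $K$. I split according to the $s_{ij}$: if some $s_{ij}$ is a proper nontrivial divisor of $|K|$, the resulting periodicity of $A_i+A_j$ can be traced through the structure of $A$ to produce a $K$-coset of $A$ containing at least $m$ elements, which immediately contradicts the standing assumption. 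In the complementary regime $s_{ij}\in\{1,|K|\}$ for all pairs, I sum the resulting Kneser inequalities over an appropriate family of pairs (for instance $(0,k)$ for $k=0,\dots,n-1$ together with $(k-n+1,n-1)$ for $k=n-1,\dots,2n-2$, with boundary refinements from $b_0\ge 2a_0-1$ and $b_{2n-2}\ge 2a_{n-1}-1$), arriving at a lower bound of the form $|2A|\ge 2|A|+n(a_0+a_{n-1})-2n-|K|$; exploiting the smallness of every $a_p$ to control the boundary terms, feeding in $|2A|=\tau|A|$ and the squeeze $(n-1)m\le|A|<nm$, and simplifying, the whole thing rearranges to $n(\tau-2)(3-\tau)<4\tau-6$, contradicting the hypothesis.

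The hardest part will be this second step: the lower bound on $|2A|$ must be quantitatively sharp enough to recover precisely the threshold $(4\tau-6)/((\tau-2)(3-\tau))$, and the Kneser-stabilizer dichotomy is the critical subtlety, since a large $s_{ij}$ both weakens the direct sumset bound and simultaneously exposes the structural periodicity one must exploit to close the argument.
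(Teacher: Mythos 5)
Your reduction to $|P|=n$ and the squeeze $(n-1)m\le|A|<nm$ is essentially right and matches the paper's first step; the difficulty is entirely in the main estimate, where your plan diverges from the paper's and, as sketched, has two genuine gaps.

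First, the dichotomy on the Kneser stabilizers $s_{ij}$ is not sound as stated. You claim that a proper nontrivial period $L<K$ of some $A_i+A_j$ ``can be traced through the structure of $A$ to produce a $K$-coset of $A$ containing at least $m$ elements.'' There is no such mechanism: periodicity of a single slice sum $A_i+A_j$ modulo a small subgroup $L$ is entirely compatible with every fiber $A\cap(p+K)$ being small (take $A_i=A_j=L$ with $|L|\approx|K|/2$ and $\tau$ close to $3$, say). So you cannot dispose of the intermediate-stabilizer case this way, and without that the slice-by-slice Kneser bound $b_k\ge a_i+a_j-s_{ij}$ carries an error term $\sum s_{ij}$ you cannot control.

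Second, even in the clean regime, anchoring the chosen pairs at the endpoint fibers $A_0$ and $A_{n-1}$ cannot produce the sharp threshold. Your candidate bound $|2A|\gtrsim 2|A|+n(a_0+a_{n-1})-\dotsb$ is only as strong as $a_0+a_{n-1}$, and under your own standing hypothesis these can both be as small as $1$; there is nothing privileging the two extreme fibers. The paper instead splits $P$ into $P_0$ (fibers of size $\le\frac12|K|$) and $P_1$ (fibers of size $>\frac12|K|$), sets $m:=|P_0|$, and uses only the pigeonhole principle (not Kneser at all): two subsets of $K$ each of size $>\frac12|K|$ have sum equal to $K$, so every coset in $2P_1+K$ is a \emph{full} $K$-coset of $2A$, and every coset in $P+P_1+K$ carries at least $\frac12|K|$ elements of $2A$. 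Counting $|2P_1|\ge 2(n-m)-1$ plus $|P+P_1|\ge 2n-m-1$ cosets gives $|2A|\ge\bigl(2n-\tfrac32 m-1\bigr)|K|$, and the assumption $\mu<\frac1{\tau-1}$ separately yields $m<\frac2{3-\tau}$; combining these two inequalities is exactly what rearranges to $n<\frac{4\tau-6}{(\tau-2)(3-\tau)}$. The $P_0/P_1$ dichotomy and the full-coset pigeonhole step are the ideas your proposal is missing, and without them the threshold will not come out.
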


Compared to Theorem~\reft{bp}, our Theorem~\reft{small} applies to the groups
$\Z\oplus H$ with $H$ not necessarily cyclic and, most importantly, allows
the doubling coefficient to be as large as $3-o(1)$ (instead of $2.5$), which
is best possible, as shown above.

The layout of the remaining part of the paper is as follows. In
Section~\refs{deduct} we deduce Theorems~\reft{big} and~\reft{propo} from
Theorem~\reft{small}, allowing us to concentrate on the proof of the latter
theorem for the rest of the paper. In Section \refs{general} we collect some
general results needed for the proof; in particular, we introduce and briefly
discuss Kneser's theorem. In Section~\refs{basic} we prove some basic
estimates related to the particular settings of Theorem~\reft{small} (in
contrast with Section~\refs{general} where the results are of general
nature). Section~\refs{twocoset} contains a result which, essentially,
establishes the special case of Theorem~\reft{small} where the set $A$ can be
partitioned into two ``additively independent'' subsets. Finally, we prove
Theorem~\reft{small} in Section~\refs{proof}.

\bigskip
\ifthenelse{\equal{\showmorestuff}{yes}}%
{{\color{brown}%
\emph{The applications to consider:
\begin{itemize}
\item[--] The structure of small-doubling sets in $\Z_m$ (which was the
    original motivation of Deshouillers-Freiman and
    Balasubramanian-Pandey);
\item[--] as an application of the first application: the structure of sets
    in $\Z$ with doubling up to $10/3$ (see my unpublished manuscript);
\item[--] as an application of the second application: improving Freiman's
    2.4-theorem.
\end{itemize}}}
}{} 

\section{Deduction of Theorems~\reft{big} and~\reft{propo} from
  Theorem~\reft{small}}\label{s:deduct}

\begin{proof}[Proof of Theorem~\reft{big}]
Let $A$ be a finite subset of an abelian group $G$ such that $A$ cannot be
covered with fewer than $n$ cosets of a finite subgroup of $G$, while
\begin{equation}\label{e:sda-m}
  |2A| < 3\Big(1-\frac1n\Big)|A|,
\end{equation}
with a real $n>0$. We want to prove, assuming Theorem~\reft{small}, that
there exist an arithmetic progression $P\seq G$ and a subgroup $K\le G$ such
that $A\seq P+K$ and $(|P|-1)|K|\le|2A|-|A|$. As explained in
Section~\refs{intro} (Remark~\refr{rem1}), the progression will satisfy
$|P|\ge 3$ and $|P+K|=|P||K|$.

Without loss of generality, we assume that $G$ is generated by $A$. By the
fundamental theorem of finitely generated abelian groups, there is then an
integer $r\ge 0$ and a finite subgroup $H\le G$ such that $G\cong\Z^r\oplus
H$. Indeed, we have $r\ge 1$ as otherwise $G$ would be finite; hence, $A$
would be contained in just one single finite coset (the group $G$ itself),
forcing $n\le 1$ and thus contradicting the small-doubling
assumption~\refe{sda-m}.

Let $G':=\Z\oplus H$. To avoid confusion, throughout the proof we use the
direct product notation for the elements of the groups $G$ and $G'$.

Fix an integer $M>0$ divisible by all positive integers up to $|2A|$, and
consider the mapping $\psi\colon G\to G'$ defined by
  $$ \psi(x_1\longc x_r,h) := (x_1+Mx_2\longp M^{r-1}x_r,h);
                                     \quad x_1\longc x_r\in\Z,\ h\in H. $$
If $M$ is large enough (as we assume below), then different elements of $A$
have different images under $\psi$, and similarly for $2A$; consequently,
writing $A':=\psi(A)$, we have $|A'|=|A|$ and $|2A'|=|2A|$, whence
  $$ |2A'| < 3\Big(1-\frac1n\Big)|A'| $$
(we implicitly use here the equality $2\psi(A)=\psi(2A)$).

Denote by $m$ the number of elements of the projection of $A$ onto the first
(torsion-free) component of $G$. If $M$ is sufficiently large, then this is
also the number of elements of the projection of $A'$ onto the first
component of $G'$. Since $A$ is not contained in a union of fewer than $n$
cosets, we have $m\ge n$, resulting in
  $$  |2A'| < 3\Big(1-\frac1m\Big)|A'|. $$
Applying Theorem~\reft{small}, we conclude that there exist a finite
arithmetic progression $P'\seq G'$ and a subgroup $K\le H$ such that $A'\seq
P'+K$ and
\begin{equation}\label{e:0814a}
  (|P'|-1)|K|\le|2A'|-|A'|=|2A|-|A|.
\end{equation}
We assume that $P'$ is the shortest progression possible with $A'\seq P'+K$.

Write $N:=|P'|-1$, and let $c\in G'$ and $(d,h)\in G'$ denote the initial
term and the difference of the progression $P'$, respectively; thus,
  $$ P' = c + \{ j(d,h) \colon j\in[0,N]\};\quad d\in\Z,\ h\in H. $$
Notice that $d\ne 0$, as otherwise we would have $A'\seq P'+K\seq c+H$, as a
result of which $A'$, and therefore also $A$, would be contained in a single
$H$-coset.

Since $P'$ is the shortest possible progression with $A'\seq P'+K$, there are
elements
 $(a_1\longc a_r,f),(b_1\longc b_r,g)\in A$ such that
$\psi(a_1\longc a_r,f)=c$ and $\psi(b_1\longc b_r,g)=c+N(d,h)$; consequently,
  $$ (b_1-a_1)+M(b_2-a_2)\longp M^{r-1}(b_r-a_r) = Nd. $$
Since $N=|P'|-1\le|2A|-|A|<|2A|$, and recalling that $M$ was chosen to be
divisible by all integers up to $|2A|$, we have $N\mid M$, and therefore
$b_1-a_1$ is a multiple of $N$. Thus
\begin{equation}\label{e:1410a}
  d=(b_1-a_1)N^{-1}+MN^{-1}(b_2-a_2)\longp M^{r-1}N^{-1}(b_r-a_r),
\end{equation}
where all summands in the right-hand side are integers.

We know that for any element $(\alp_1\longc\alp_r,\eta)\in A$, there exist
$j\in[0,N]$ and $k\in K$ such that
\begin{align*}
  (\alp_1\longp M^{r-1}\alp_r,\eta)
    &= c+j(d,h)+(0,k) \\
    &= (a_1\longp M^{r-1}a_r,f)+j(d,h)+(0,k).
\end{align*}
Recalling~\refe{1410a}, we obtain
  $$ (\alp_1-a_1)\longp M^{r-1}(\alp_r-a_r) = jd
      = j(b_1-a_1)N^{-1}\longp jM^{r-1}N^{-1}(b_r-a_r); $$
that is,
\begin{equation}\label{e:0814b}
  (\alp_1-a_1)N\longp M^{r-1}(\alp_r-a_r)N
                                   = j(b_1-a_1)\longp jM^{r-1}(b_r-a_r)
\end{equation}
with $j\in[0,N]$ depending on $\alp_1\longc\alp_r$. (Notice that $N$ depends
on $M$, but is bounded: $N\le N|K|\le|2A|-|A|$ by~\refe{0814a}.) Choosing $M$
sufficiently large, from~\refe{0814b} we get
\begin{equation}\label{e:1310a}
  (\alp_i-a_i)N = j\,(b_i-a_i),\ 1\le i\le r,
\end{equation}
showing that $(b_i-a_i)j$ is divisible by $N$. Using again the fact that $P'$
is the shortest possible progression with $A'\seq P'+K$, we conclude that the
possible values of $j$ that can emerge from different elements
$(\alp_1\longc\alp_r,\eta)\in A$ are coprime. Hence, there is a linear
combination of these values, with integer coefficients, which is equal to
$1$. Consequently, from~\refe{1310a}, all numbers $(b_i-a_i)N^{-1}$, $1\le
i\le r$, are integers, and then, by~\refe{1310a} again,
\begin{multline*}
  (\alp_1\longc\alp_r,\eta)
      = (a_1\longc a_r,f) +  j((b_1-a_1)N^{-1}\longc(b_r-a_r)N^{-1},h)
     + (0\longc 0,k).
\end{multline*}
This shows that $A\seq P+K$, where $P\seq G$ is the $(N+1)$-term arithmetic
progression with the initial term $(a_1\longc a_r,f)$ and the difference
$((b_1-a_1)N^{-1}\longc(b_r-a_r)N^{-1},h)$. Finally, by~\refe{0814a},
  $$ |2A|-|A|=|2A'|-|A'|\ge(|P'|-1)|K|=(|P|-1)|K|. $$
\end{proof}

\begin{proof}[Proof of Theorem~\reft{propo}]
Let $B$ denote the projection of $A$ onto $\Z$ along $H$; thus, $|P|\ge
|B|=n$, with equality if and only if $B$ is an arithmetic progression. If
this is not the case, then $|P|\ge n+1$ and, by averaging, there is a
$K$-coset containing at least
  $$ \frac{|A|}n \ge \frac{|A|}{|P|-1} \ge \frac{|K|}{\tau-1} $$
elements of $A$ (the last inequality following directly from the estimate
$|2A|-|A|\ge(|P|-1)|K|$ of Theorem~\reft{small}). Suppose thus that $B$ is an
arithmetic progression and, consequently, $|P|=n$ and $|2A|-|A|\ge(n-1)|K|$,
whence
  $$ |A|\ge\frac{n-1}{\tau-1}\,|K|. $$
Let
\begin{gather*}
  M := \max\{|A\cap(g+K)|\colon g\in P\}, \ \mu:=|M|/|K|, \\
  P_0 := \big\{ g\in P\colon |A\cap(g+K)| \le \frac12\,|K| \big\},
    \ P_1:=P\stm P_0,\ \text{and}\ m:=|P_0|.
\end{gather*}
Notice that $M>\frac12\,|K|$ as otherwise we would have
  $$ \frac12\,|K| \ge M \ge \frac{|A|}n
                                  \ge \Big(1-\frac1n\Big)\frac{|K|}{\tau-1}, $$
which is easily seen to contradict $\tau<3(1-\frac1n)$. Therefore $P_1$ is
nonempty, and $m<n$.

We want to show that $\mu>\frac1{\tau-1}$. Suppose for a contradiction that
this is wrong. Since $P+K$ is a union of $n$ pairwise disjoint $K$-cosets, of
which $m$ contain at most $\frac12|K|$ elements of $A$, and the remaining
$n-m$ contain at most $M$ elements each, we have
\begin{equation}\label{e:1801b}
  \frac{n-1}{\tau-1}\,|K| \le |A| \le m\cdot \frac12|K| + (n-m)\cdot M,
\end{equation}
leading to
  $$ \frac{n-1}{\tau-1}
          \le \frac12\,m  + (n-m)\mu < \frac12\,m  + \frac{n-m}{\tau-1}, $$
where the last inequality follows from the assumption $\mu<1/(\tau-1)$. This
simplifies to the estimate
\begin{equation}\label{e:1801a}
  m < \frac2{3-\tau}
\end{equation}
which we will need shortly.

The set $2P_1+K$ is a union of $|2P_1|\ge 2|P_1|-1=2(n-m)-1$ distinct
$K$-cosets contained in $2A$ by the pigeonhole principle. The set $P+P_1+K$
is a union of $|P+P_1|\ge|P|+|P_1|-1=2n-m-1$ distinct $K$-cosets, each of
them containing at least $\frac12\,|K|$ elements of $2A$. We thus can find
$2n-2m-1$ cosets represented by the elements of $2P_1$, and then $m$ more
cosets represented by the elements of $P+P_1$. Altogether, we get $2n-m-1$
cosets containing at least
  $$ (2n-2m-1)|K|+\frac12\,|K|m = \Big(2n-\frac32\,m-1\Big)|K| $$
elements of $2A$. It follows that
  $$ 2n-\frac32\,m-1 \le \frac{|2A|}{|K|} = \tau \frac{|A|}{|K|}
        \le \Big(\frac12 m + (n-m)\mu\Big)\tau
            < \Big(\frac12 m + \frac{n-m}{\tau-1} \Big)\tau, $$
cf.~\refe{1801b}. Rearranging the terms gives
  $$ \Big(1-\frac1{\tau-1}\Big) n
        < \Big( \frac32+\frac{\tau}2-\frac{\tau}{\tau-1} \Big) m+1; $$
that is, using~\refe{1801a},
  $$ \frac{\tau-2}{\tau-1}\,n < \frac{\tau^2-3}{2(\tau-1)}m + 1
        < \frac{\tau^2-3}{(\tau-1)(3-\tau)} + 1 $$
leading to
  $$ n < \frac{\tau^2-3}{(\tau-2)(3-\tau)} + \frac{\tau-1}{\tau-2}
               = \frac{4\tau-6}{(\tau-2)(3-\tau)}, $$
and the assertion follows.
\end{proof}

The rest of the paper is devoted to the proof of Theorem~\reft{small}.

\section{General results}\label{s:general}\label{s:kneser}
In this section we collect some general results valid in any abelian group,
regardless of the particular settings of Theorem~\reft{small}.

For a subset $S$ of an abelian group, let $\pi(S)$ denote the \emph{period
(stabilizer)} of $S$; that is, $\pi(S)$ is the subgroup consisting of all
those group elements $g$ with $S+g=S$. The set $S$ is called \emph{aperiodic}
or \emph{periodic} according to whether $\pi(S)$ is or is not the zero
subgroup.

We start with a basic theorem due to Kneser which is heavily used in our
argument.
\begin{theorem}[{Kneser, \cite{b:kn1,b:kn2}; see also
  \cite[Theorem~4.1]{b:n}}]\label{t:Kneser}
If $B$ and $C$ are finite, non-empty subsets of an abelian group with
  $$ |B+C| \le |B|+|C|-1, $$
then letting $L:=\pi(B+C)$ we have
  $$ |B+C| = |B+L|+|C+L|-|L|. $$
\end{theorem}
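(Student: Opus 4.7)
The plan is to reduce Kneser's theorem to its aperiodic form and then run the classical inductive argument based on the Dyson $e$-transform.

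First, I would pass to the quotient $\oG := G/L$, where $L := \pi(B+C)$, and write $\overline{B}$, $\overline{C}$ for the images of $B+L$, $C+L$. The sumset $\overline{B}+\overline{C}$ is the image of $B+C$ and is aperiodic in $\oG$. Since $L$ stabilizes $B+C$, one has $|B+C| = |L|\cdot|\overline{B}+\overline{C}|$, $|B+L| = |L|\cdot|\overline{B}|$, and $|C+L| = |L|\cdot|\overline{C}|$, so the desired conclusion reduces to $|\overline{B}+\overline{C}| = |\overline{B}| + |\overline{C}| - 1$. The $\le$ direction follows at once from the hypothesis: using $|B| \le |B+L|$ and $|C| \le |C+L|$,
$$ |L|\cdot|\overline{B}+\overline{C}| = |B+C| \le |B|+|C|-1 \le |L|\bigl(|\overline{B}|+|\overline{C}|\bigr) - 1, $$
so $|\overline{B}+\overline{C}| \le |\overline{B}|+|\overline{C}| - 1/|L|$, and the left-hand side being an integer forces $|\overline{B}+\overline{C}| \le |\overline{B}|+|\overline{C}| - 1$.

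The substance of the proof is the matching lower bound in the aperiodic setting: if $S, T$ are finite non-empty subsets of an abelian group with $\pi(S+T) = \{0\}$, then $|S+T| \ge |S|+|T|-1$. I would prove this by induction on $|S|+|T|$. The base case $|S|=1$ is immediate. For the inductive step, assuming $|S|,|T| \ge 2$ and translating so that $0 \in S \cap T$, I would apply the Dyson transform: for each $e$ in the ambient group, set $S_e := S \cup (T+e)$ and $T_e := T \cap (S-e)$, which satisfy $|S_e|+|T_e| = |S|+|T|$ and $S_e+T_e \seq S+T$. Among all pairs $(S',T')$ obtainable by iterating such transforms with $T' \ne \est$, select one minimizing $|T'|$; at this extremum, either $|T'| = 1$ (so the base case gives the bound directly) or the stabilizer of $S'+T'$ is non-trivial, which one argues leads to a contradiction with the aperiodicity of $S+T$.

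The main obstacle is this last step. The difficulty is showing that non-trivial periodicity of the extremal sumset $S'+T'$ is inconsistent with aperiodicity of the original $S+T$: because $S'+T' \seq S+T$ without equality in general, a subgroup stabilizing $S'+T'$ need not stabilize $S+T$ a priori. Handling this discrepancy, by carefully tracking how the Dyson union and intersection operations interact with translation by a would-be period, is the main technical content and what I expect to be the hardest part of the argument.
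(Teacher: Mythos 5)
The paper does not prove Kneser's theorem; it only states it with a citation to Kneser's original papers and to Nathanson's book, so there is no internal proof to compare your sketch against.

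Taken on its own terms, your reduction to the quotient $G/L$ and your derivation of the ``$\le$'' inequality are both correct, and the Dyson $e$-transform facts you quote ($|S_e|+|T_e|=|S|+|T|$, $S_e+T_e\subseteq S+T$) are right. But the gap you flag at the end is genuine, and as framed it cannot be closed, because the $e$-transform does not preserve aperiodicity of the sumset. When the transform stalls --- no $e\in S'-T'$ reduces $|T'|$ --- one finds (after translating $0\in T'$) that $T'-T'\subseteq\pi(S')$, hence $S'+T'=S'$ is periodic with a nontrivial stabilizer. This is entirely consistent with aperiodicity of the original $S+T$, since $S'$ is in general a \emph{proper} subset of $S+T$, so no contradiction can be extracted. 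Quantitatively, at the stall you only know $|S'|=|S|+|T|-|T'|$ and $S\subseteq S'\subseteq S+T$, which gives $|S+T|\ge|S|+|T|-|T'|$ and falls short by $|T'|-1$. The standard way past this is not a contradiction argument but a genuine inductive appeal: pass to the quotient by $\pi(S'+T')$, invoke the \emph{full} (periodic) form of Kneser there, and then account separately for the part of $S+T$ lying outside $S'+T'$. In other words, the periodic statement must be carried through the induction; trying to induct on the aperiodic form alone, as your plan does, is structurally the wrong setup, and the step you identify as ``the hardest part'' is in fact the theorem.
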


We will be referring Theorem~\reft{Kneser} as \emph{Kneser's theorem}.

Since, in the above notation, we have $|B+L|\ge|B|$ and $|C+L|\ge|C|$,
Kneser's theorem shows that $|B+C|\ge|B|+|C|-|L|$, leading to

\begin{corollary}
If $B$ and $C$ are finite, non-empty subsets of an abelian group, such that
$|B+C|<|B|+|C|-1$, then $B+C$ is periodic.
\end{corollary}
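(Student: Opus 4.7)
The plan is to argue by contradiction, directly feeding the hypothesis into Kneser's theorem (Theorem~\reft{Kneser}). Since the strict inequality $|B+C|<|B|+|C|-1$ in particular implies $|B+C|\le|B|+|C|-1$, the hypothesis of Kneser's theorem is satisfied, so setting $L:=\pi(B+C)$ one has the identity
  $$ |B+C| = |B+L|+|C+L|-|L|. $$

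Now I would suppose, for contradiction, that $B+C$ is aperiodic, i.e.\ $L=\{0\}$. Then $|L|=1$, and since $B+\{0\}=B$ and $C+\{0\}=C$, the Kneser identity collapses to $|B+C|=|B|+|C|-1$, directly contradicting the strict inequality $|B+C|<|B|+|C|-1$. Hence $L$ must be nontrivial, which is precisely the statement that $B+C$ is periodic.

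There is essentially no obstacle here: the corollary is an immediate and standard consequence of the Kneser identity once one observes that the only way $|B+L|+|C+L|-|L|$ can drop strictly below $|B|+|C|-1$ is for $L$ to have $|L|\ge 2$ (since each of $|B+L|-|B|$ and $|C+L|-|C|$ is a nonnegative multiple of $|L|-1$, but more simply, because with $|L|=1$ the right-hand side is exactly $|B|+|C|-1$). The only thing to double-check is that the statement of Kneser's theorem being invoked permits equality in its hypothesis, which it does as written in Theorem~\reft{Kneser}.
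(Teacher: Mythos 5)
Your proof is correct and essentially matches the paper's: both apply Kneser's theorem directly and extract $|L|>1$ from the identity $|B+C|=|B+L|+|C+L|-|L|$, with the paper phrasing this via the inequality $|B+C|\ge|B|+|C|-|L|$ and you phrasing it as a contradiction with $L=\{0\}$. The two are the same argument.
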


The following lemma is well known, but tracing it down to the origin is
hardly possible.
\begin{lemma}\label{l:1.5}
Let $B$ be a finite subset of an abelian group. If $|2B|<\frac32\,|B|$, then
there is a subgroup $L$ such that $B-B=L$, and $2B$ is an $L$-coset (as a
result of which $B$ is contained in a unique $L$-coset).
\end{lemma}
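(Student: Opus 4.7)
The plan is to derive all three conclusions from one application of Kneser's theorem to $B+B$. The trivial case $|B|=1$ can be dispatched with $L=\{0\}$, so assume $|B|\ge 2$. The elementary inequality $\frac{3}{2}|B|\le 2|B|-1$ promotes the hypothesis to $|2B|<|B|+|B|-1$, and the corollary to Kneser's theorem then forces $2B$ to be periodic; set $L:=\pi(2B)\ne\{0\}$. This numerical step is the only place where the separate treatment of $|B|=1$ matters.

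Applying Kneser's theorem itself gives $|2B|=2|B+L|-|L|$. Combining with the doubling hypothesis and using $|B|\le|B+L|$ yields $2|B+L|-|L|<\frac{3}{2}|B+L|$, hence $|B+L|<2|L|$. Since $B+L$ is a union of $L$-cosets, it must be a single coset; thus $B$ lies in one $L$-coset, $|2B|=|L|$, and $2B$ is itself an $L$-coset (being $L$-periodic of exactly that size).

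It remains to show $B-B=L$, the inclusion $\seq$ being immediate from $B$ lying in a single $L$-coset. For the reverse, fix $\ell\in L$; both $B$ and $B+\ell$ are contained in the same $L$-coset, which has cardinality $|L|=|2B|<\frac{3}{2}|B|<2|B|$. By pigeonhole, $|B|+|B+\ell|>|L|$ forces $B\cap(B+\ell)\ne\est$, whence $\ell\in B-B$. No genuine obstacle appears; the small arithmetic conversion enabling the invocation of Kneser's theorem is the only subtlety to watch.
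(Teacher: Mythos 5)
Your proof is correct, and it takes a genuinely different route from the paper's. The paper gives a deliberately self-contained argument (avoiding Kneser's theorem): it counts representations $r(g)$ of each $g\in B-B$ as a difference of two elements of $B$, shows $r(g)>\frac12|B|$ from the hypothesis, and then uses pigeonhole twice --- once to show $B-B$ is closed under subtraction and hence a subgroup $L$, and once (after the estimate $|B|>\frac12|L|$) to see that $2B$ fills the whole $L$-coset. You instead invoke Kneser's theorem outright: the hypothesis gives $|2B|<2|B|-1$ (for $|B|\ge 2$), so $2B$ is periodic with stabilizer $L=\pi(2B)\ne\{0\}$, and the Kneser identity $|2B|=2|B+L|-|L|$ combined with $|B|\le|B+L|$ forces $|B+L|<2|L|$, hence $B+L$ is a single coset; the identities $|2B|=|L|$ and $B-B=L$ then fall out with short pigeonhole arguments. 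Both proofs are clean; the paper's has the advantage of being elementary and independent of the Kneser machinery (the author explicitly flags this as the reason for preferring it), while yours is shorter once Kneser's theorem is available, and it more directly exposes why the hypothesis forces $B$ into a single coset of the stabilizer of $2B$.
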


We give a somewhat nonstandard, self-contained proof of the lemma.
\begin{proof}[Proof of Lemma~\refl{1.5}]
For a group element $g$, denote by $r(g)$ the number of representations of
$g$ as a difference of two elements of $B$. If $g\in B-B$, then choosing
arbitrarily $b,c\in B$ with $g=b-c$ we get
  $$ r(g) = |(b+B)\cap(c+B)| \ge 2|B| - |2B| > \frac12\,|B|. $$
By the pigeonhole principle, for any $g_1,g_2\in B-B$ there are
representations $g_1=b_1-c_1,\ g_2=b_2-c_2$ with $c_1=c_2$; consequently,
 $g_1-g_2=b_1-b_2\in B-B$, showing that $L:=B-B$ is a subgroup. Clearly, $B$
is contained in a unique $L$-coset.

As we have shown, for every element $g\in L=B-B$ we have $r(g)>\frac12\,|B|$.
As a result,
  $$ |B|(|B|-1) = \sum_{g\in L\stm\{0\}}r(g) > \frac12\,|B|\cdot(|L|-1), $$
implying $|B|>\frac12\,|L|$. Recalling that $B$ is contained in a unique
$L$-coset, and using the pigeonhole principle again, we conclude that $2B$ is
an $L$-coset.
\end{proof}

\begin{lemma}\label{l:pairs}
Suppose that $B$ is a subset of an abelian group with $0\in B$. If $N\ge 3$
is an integer such that $|B|=N+1$ and $|2B|=2N+1$ (thus $|2B\stm B|=N$), then
one of the following holds:
\begin{itemize}
\item[i)] there exist $b_1\longc c_N\in B$ such that $2B\stm
    B=\{b_1+c_1\longc b_N+c_N\}$, and every element of $B$ appears among
    $b_1\longc c_N$ at most $N$ times;
\item[ii)] there is a subgroup $L$ with $|L|=N$ and a group element $g$
    with $2g\notin L$ such that $B=L\cup\{g\}$. (In this case there exist
    $b_1\longc c_N\in B$ such that $2B\stm B=\{b_1+c_1\longc b_N+c_N\}$,
    and every element of $B$ appears among $b_1\longc c_N$ exactly once,
    except that $0$ does not appear at all, and $g$ appears $N+1$ times.)
\item[iii)] $N=2$ and there is a subgroup $L$ with $|L|=2$ and a group
    element $g$ with $2g\notin L$ such that $B=(g+L)\cup\{0\}$.
\end{itemize}
\end{lemma}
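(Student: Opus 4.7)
The plan is to characterize the failure of (i) as a Hall-type obstruction and extract the subgroup structure of (ii) from it; case (iii) cannot occur here, as it requires $N=2$.

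Since $0\in B$ and $s\notin B$ for each $s\in 2B\stm B$, no representation of $s$ can involve $0$. For $g\in B$, set $T(g):=\{s\in 2B\stm B\colon\text{every representation of }s\text{ uses }g\}$. In any admissible list $b_1\longc c_N$, the element $g$ occurs at least $|T(g)|+[2g\in T(g)]$ times---the $+1$ because the sole representation of $2g\in T(g)$ is $(g,g)$. A Hall-type argument on the representation structure (each $s\in 2B\stm B$ has demand $2$, each $b\in B$ has capacity $N$) shows that (i) holds iff $|T(g)|+[2g\in T(g)]\le N$ for every $g\in B$: since each $s$ demands only two units, any potential obstruction supported on a set $W\seq B$ with $|W|\ge 2$ accounts for at most $2N\le N|W|$ units and so cannot bind. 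Thus failure of (i) forces some $g\in B$ with $T(g)=2B\stm B$ and $2g\in 2B\stm B$.

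Assume this, and put $L:=B\stm\{g\}$, so that $|L|=N$ and $0\in L$. For $a,b\in L$, if $a+b\in 2B\stm B$ then $(a,b)$ would represent $a+b$ without using $g$, contradicting $T(g)=2B\stm B$; hence $L+L\seq B$. The central step is to sharpen this to $L+L\seq L$, that is, $g\notin L+L$: for then $L$ is a finite abelian subset containing $0$ and closed under addition, hence a subgroup of order $N$, and, coupled with $2g\notin B\supseteq L$, we recover case (ii). Suppose then $g=a+b$ with $a,b\in L$. If $a\ne b$, the pair $(2a,2b)$ sums to $2g$ and differs from the unique representation $(g,g)$ of $2g$ (the equality $2a=g$ would give $a=b$); so $2a,2b\in B$ would yield two representations of $2g$, contradicting uniqueness. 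Hence (WLOG) $2a\notin B$, so $2a\in 2B\stm B$, and $(a,a)$ is then a representation of $2a$ not using $g$---contradicting $T(g)=2B\stm B$.

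The remaining case $a=b$, that is $g=2a$, is the main obstacle. Here $L+L=B$, so $|L+L|=N+1$, and applying Kneser's theorem (Theorem~\reft{Kneser}) to $L+L$ gives $|\pi(L+L)|\ge 2|L|-|L+L|=N-1\ge 2$. Since $|\pi(L+L)|$ must divide $|L+L|=N+1$, the only possibility for $N\ge 4$ is $|\pi(L+L)|=N+1$; then $L+L=B$ is itself a subgroup of order $N+1$ and $|2B|=|B|=N+1<2N+1$, a contradiction. For $N=3$ one also has the possibility $|\pi(L+L)|=2$, and a short direct inspection of the three possible three-element subsets $L$ of the resulting four-element set $L+L$ (enforcing $g=2a\in L+L\stm L$ for some $a\in L$) either contradicts the existence of such $a$ outright or forces $B$ to be a coset of $\Z/4\Z$ with $|2B|=4<7$. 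Thus $g\notin L+L$ in every case. Once $L$ is a subgroup of order $N$ with $2g\notin L$, the representation count in (ii) is immediate: each $s\in 2B\stm B$ has the unique unordered representation $\{g,s-g\}$, with $s-g\in L\stm\{0\}$ if $s\ne 2g$ and $s-g=g$ if $s=2g$. Hence $g$ appears $(N-1)+2=N+1$ times, each nonzero element of $L$ exactly once, and $0$ not at all.
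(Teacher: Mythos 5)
The overall strategy is sound and reaches the same endpoint as the paper, but there is a genuine gap at the pivotal first step.

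You characterize failure of (i) by the existence of a $g\in B$ with $|T(g)|+[2g\in T(g)]>N$, i.e.\ $T(g)=2B\setminus B$ and $2g\in T(g)$. The forward direction (any admissible list uses such a $g$ at least $|T(g)|+[2g\in T(g)]$ times) is correct and easy. The backward direction --- that $|T(g)|+[2g\in T(g)]\le N$ for every $g$ actually \emph{produces} a list in which every element of $B$ is used at most $N$ times --- is the entire content of the step, and your ``Hall-type argument'' does not establish it. Choosing, for each $s\in 2B\setminus B$, a representation $\{b,c\}$ while bounding the multiplicity of each $g\in B$ is not a bipartite-matching/SDR problem: each $s$ must consume an entire (possibly degenerate) pair at once, and a degenerate pair $(g,g)$ puts two units on one vertex. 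Hall's theorem and its defect versions do not apply off the shelf, and the remark that ``an obstruction supported on $|W|\ge 2$ accounts for at most $2N\le N|W|$ units'' is a heuristic, not a proof that the only obstructions are singletons. Indeed, a rigorous argument here does require work: one can, say, take an assignment minimizing the total overload, observe that only one $g_1$ can exceed $N$ (since total usage is $2N$), and then swap some $s_0\notin T(g_1)$ away from $g_1$ to an alternative representation --- but one then has to handle the case where the only alternative is a degenerate pair $(b,b)$ and rule out $b$ going over (it works, but for reasons specific to this setting, not for the generic counting reason you offer). The paper handles exactly this point constructively: start from an arbitrary $2N$-tuple, assume some $g$ appears $\ge N+1$ times, and if $2B_g\not\subseteq B$, explicitly replace one pair by $(b,c)\in B_g\times B_g$ with $b+c\notin B$, then verify (with a case split on whether the replaced index is $1$ or $\ge 2$) that the new tuple lands in case (i). That verification is the content you are skipping. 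As written, the step must be counted as a gap.

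Once one grants $T(g)=2B\setminus B$ (equivalently $2(B\setminus\{g\})\subseteq B$) and $2g\notin B$, the rest of your argument is correct and, in one place, genuinely different from the paper's: the paper extracts the subgroup $L=B\setminus\{g\}$ via Lemma~\ref{l:1.5}, whereas you prove $g\notin L+L$ directly (the $a\ne b$ subcase is a nice short argument) and then dispatch the $g=2a$ subcase by Kneser's theorem applied to $L+L$. That route is legitimate, and the divisibility count $|\pi(L+L)| \mid N+1$ with $|\pi(L+L)|\ge N-1$ cleanly kills all $N\ge 4$; your description of the residual $N=3$, $|\pi|=2$ subcase is slightly garbled (in fact there is never a valid $a$ in that subcase; the only $\Z/4\Z$ or Klein-four instances live in the $|\pi|=4$ branch, which you have already eliminated), but the conclusion is a contradiction either way, so that is only a presentational blemish, not an error.
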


\begin{proof}
Leaving the case $N=2$ to the reader (hint: write $B=\{0,b,g\}$ and consider
two cases: $b+g=0$ and $b+g\ne 0$), we confine ourself to the general case
where $N\ge 3$.

Choose $b_1\longc c_N\in B$ arbitrarily to have
 $2B\stm B=\{b_1+c_1\longc b_N+c_N\}$. Since all sums $b_i+c_i$ are distinct,
for any $g\in B$ there is at most one index $i\in[1,N]$ with $b_i=c_i=g$.
Consequently, if there is an element $g\in B$ which appears at least $N+1$
times among $b_1\longc c_N$ (as we now assume), then in fact it appears $N+1$
times exactly: namely, $b_i=c_i=g$ for some $i\in[1,N]$ and, besides, for any
$j\neq i$, exactly one of $b_i$ and $c_j$ is equal to $g$. Redenoting, we
assume that $b_1=c_1\longe c_N=g$.

Notice that $2g=b_1+c_1\in 2B\stm B$ along with $0\in B$ show that $g\ne 0$.
Write $B_0:=B\stm\{0\}$ and $B_g:=B\stm\{g\}$. Since the sums $b_i+c_i=b_i+g$
are pairwise distinct, so are the elements $b_1\longc b_N\in B$. Moreover,
$b_1\longc b_N$ are nonzero in view of $b_i+g=b_i+c_i\notin B$ and $g\in B$,
and since $|B_0|=N$, it follows that $\{b_1\longc b_N\}=B_0$; consequently,
$2B\stm B=g+B_0$.

If there exist some $b,c\in B_g$ with $b+c\notin B$, then choosing
$i\in[1,N]$ with $b_i+g=b+c$ and replacing $b_i$ with $b$ and $c_i$ with $c$
in the $2N$-tuple $(b_1\longc c_N)$, we get another $2N$-tuple
 $(b_1'\longc c_N')$ such that the sums $b_i'+c_i'$ list all elements of
$2B\stm B$. If $i\in[2,N]$, then $g$ appears exactly $N$ times among
$b_1'\longc c_N'$, so that no other element of $B$ can appear $N+1$ or more
times. Similarly, if $i=1$, then in view of $c_2'\longe c_N'=g$, and since
all sums $b_2'+c_2'\longc b_N'+c_N'$ are pairwise distinct, every element
$b\in B_g$ appears at most $3<N+1$ times among $b_1'\longc c_N'$. Thus, the
assertion holds true in this case.

Suppose therefore that $b,c\in B_g$ with $b+c\notin B$ do not exist; that is,
$2B_g\seq B$. This gives $|2B_g|\le|B_g|+1$; hence, by Lemma~\refl{1.5} and
in view of $0\in B_g$, the set $L:=B_g-B_g=2B_g$ is a subgroup. Furthermore,
since $B_g\seq 2B_g=L$ and $|B_g|\ge|2B_g|-1=|L|-1$, we have either $B_g=L$,
or $B_g=L\stm\{l\}$ with some $l\in L$, $l\ne 0$. The latter case is in fact
impossible as we would have $l\in 2B_g\stm B$ in this case, contradicting the
present assumption $2B_g\seq B$. In the former case we have $B=L\cup\{g\}$
and $2B=L\cup(g+L)\cup\{2g\}$, with $2g\notin L$ in view of
$|2B|=2N+1=2|B|-1=2|L|+1$.
\end{proof}

\begin{lemma}\label{l:2101}
Suppose that $L$ is a subgroup, and that $B$ and $C$ are subsets of an
abelian group. Let $\phi_L$ denote the canonical homomorphism onto the
quotient group.
\begin{itemize}
\item[i)] We have $\phi_L(B\cup C)=\phi_L(B)\cup\phi_L(C)$.
\item[ii)] If at least one of $B+L=B$ and $C+L=C$ holds, then $\phi_L(B\cap
    C)=\phi_L(B)\cap\phi_L(C)$, and hence $\phi_L(B\stm
    C)=\phi_L(B)\stm\phi_L(C)$.
\end{itemize}
\end{lemma}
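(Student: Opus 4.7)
The plan is to unwind the definitions of direct image and quotient map; no deeper machinery is required.

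Part~i) is a general fact about direct images that holds for any function, and I would dispatch it in one line: $y\in\phi_L(B\cup C)$ if and only if $y=\phi_L(x)$ for some $x\in B$ or some $x\in C$, which is exactly $y\in\phi_L(B)\cup\phi_L(C)$.

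For the intersection claim of~ii), I would first note that the inclusion $\phi_L(B\cap C)\seq\phi_L(B)\cap\phi_L(C)$ holds for any function. For the reverse inclusion, I would assume without loss of generality that $B+L=B$ (the two forms of the hypothesis are interchangeable by the symmetry of intersection), take $y\in\phi_L(B)\cap\phi_L(C)$, and write $y=\phi_L(b)=\phi_L(c)$ for some $b\in B$ and $c\in C$. Then $c-b\in L$, whence $c=b+(c-b)\in B+L=B$; so $c\in B\cap C$ and $y\in\phi_L(B\cap C)$, finishing the intersection identity.

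For the set-difference consequence, the effective hypothesis is $C+L=C$: under this assumption $\phi_L^{-1}(\phi_L(C))=C$, so for any $b$ in the ambient group, $\phi_L(b)\in\phi_L(C)$ if and only if $b\in C$. I would then get both inclusions of $\phi_L(B\stm C)=\phi_L(B)\stm\phi_L(C)$ immediately by selecting a representative $b\in B$ and checking this condition: a point $b\in B\stm C$ has $\phi_L(b)\in\phi_L(B)\stm\phi_L(C)$, and conversely any preimage $b\in B$ of an element of $\phi_L(B)\stm\phi_L(C)$ must lie outside $C$, hence in $B\stm C$.

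No real obstacle arises; the lemma is entirely a matter of definition-chasing. The one point worth noting is that the set-difference identity is not symmetric in $B$ and $C$, so in its half of the proof the relevant form of the hypothesis is specifically $C+L=C$ rather than $B+L=B$.
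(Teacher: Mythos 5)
Your argument for part i) and for the intersection identity in ii) is the same definition-chasing the paper uses: fix $t=\phi_L(b)=\phi_L(c)$ with $b\in B$, $c\in C$, and observe that $c\in b+L\subseteq B+L=B$, so $c\in B\cap C$.

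What you add, and the paper glosses over, is the careful observation that the set-difference consequence is \emph{not} symmetric in $B$ and $C$: it genuinely needs $C+L=C$, not merely ``at least one of $B+L=B$ and $C+L=C$.'' The lemma as printed is in fact slightly too strong on this point: with $G=\Z$, $L=2\Z$, $B=\Z$ (so $B+L=B$), and $C=\{0\}$, one has $\phi_L(B\stm C)=\Z/2\Z$ while $\phi_L(B)\stm\phi_L(C)=\{\bar1\}$, so the difference identity fails under the hypothesis $B+L=B$ alone. Your proof correctly works under $C+L=C$ (equivalently $\phi_L^{-1}(\phi_L(C))=C$), where the two inclusions are immediate; alternatively one can note that under $C+L=C$ the sets $\phi_L(B\stm C)$ and $\phi_L(B\cap C)$ are disjoint, and then the difference identity really does follow ``hence'' from the intersection identity and part~i). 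In short: your proof is correct and essentially the same where the paper is careful, and your asymmetry remark repairs a small imprecision in the statement; happily, every application of the lemma in the paper invokes only the intersection identity (or the difference identity in a situation where $C$ is genuinely $L$-periodic), so nothing downstream is affected.
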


\begin{proof}
The first assertion is trivial and is stated for completeness only.

For the second assertion, we assume, for definiteness, that $B+L=B$, and show
that $\phi_L(B)\cap\phi_L(C)\seq\phi_L(B\cap C)$; the opposite inclusion is
trivial. Fix an element $t\in\phi_L(B)\cap\phi_L(C)$. Since $t\in\phi_L(C)$,
there exists $c\in C$ with $t=\phi_L(c)$. Now $\phi_L(c)=t\in\phi_L(B)$ gives
$c\in B+L=B$, showing that $c\in B\cap C$ and therefore
$t=\phi_L(c)\in\phi_L(B\cap C)$. The assertion follows.
\end{proof}

\begin{corollary}\label{c:FGHDel}
Suppose that $L$ is a subgroup, and that $B$ and $C$ are subsets of an
abelian group. If $B+L=B$, then $\phi_L(B\cap C)=\phi_L(B)$ is equivalent to
any of $\phi_L(B)\seq\phi_L(C)$ and $B+L\seq C$.
\end{corollary}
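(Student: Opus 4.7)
The plan is to derive the corollary entirely from Lemma~\refl{2101}(ii), which under the hypothesis $B+L=B$ gives the compact identity $\phi_L(B\cap C)=\phi_L(B)\cap\phi_L(C)$. Once this identity is in hand, both claimed equivalences fall out by pure set-theoretic manipulation, and the proof should be essentially a two-line unpacking.

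First I would handle the equivalence $\phi_L(B\cap C)=\phi_L(B)\iff\phi_L(B)\seq\phi_L(C)$. Substituting the identity above, the left-hand equality becomes $\phi_L(B)\cap\phi_L(C)=\phi_L(B)$, which is literally the inclusion $\phi_L(B)\seq\phi_L(C)$; nothing further is required. Next I would address the equivalence with $B+L\seq C$. The direction $B+L\seq C\Rightarrow\phi_L(B)\seq\phi_L(C)$ is immediate by applying $\phi_L$ to both sides and using $\phi_L(B+L)=\phi_L(B)$. For the converse, given any $b\in B=B+L$, the hypothesis $\phi_L(B)\seq\phi_L(C)$ supplies a $c\in C$ with $\phi_L(b)=\phi_L(c)$, so $b\in c+L$; reading this coset-wise and using $B+L=B$, the whole $L$-coset $b+L\seq B$ sits inside $c+L$, yielding the inclusion $B+L\seq C$ in the coset-closure sense intended by the statement.

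There is no substantive obstacle: the corollary is a purely formal consequence of Lemma~\refl{2101}(ii), with the role of the hypothesis $B+L=B$ being precisely to license that lemma's application. The one care-point is the final step of the second equivalence, where one must translate the pointwise fact $\phi_L(b)=\phi_L(c)$ into the set-level inclusion $B+L\seq C$; I would phrase this step in terms of $L$-cosets rather than individual elements to keep the bookkeeping transparent.
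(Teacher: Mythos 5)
Your proof follows the paper's route exactly: apply Lemma~\ref{l:2101}(ii) to reduce $\phi_L(B\cap C)=\phi_L(B)$ to $\phi_L(B)\seq\phi_L(C)$, then unpack the latter element-by-element. You were right to hedge in the final step: the literal inclusion $B+L\seq C$ in the statement appears to be a slip for $B\seq C+L$, which is what both your argument and the paper's own proof actually establish (and which is also what the paper uses downstream, e.g.\ in deriving $F\le H+\Del$ from $\oK=\oF$).
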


\begin{proof}
Applying the lemma, we get $\phi_L(B\cap C)=\phi_L(B)\cap\phi_L(C)$. Thus,
$\phi_L(B\cap C)=\phi_L(B)$ is equivalent to $\phi_L(B)\seq\phi_L(C)$, which
is immediately seen to be equivalent to $B\seq C+L$.
\end{proof}

\begin{lemma}\label{l:module}
If $G$ is an abelian group having the direct sum decomposition $G=\Z\oplus H$
with $H$ finite, then every subgroup $G'<G$ is of the form $G'=\<g\>+K$ with
some $g\in G$ and $K\le H$; indeed, one can take $K:=G'\cap H$.
\end{lemma}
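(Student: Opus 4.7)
The plan is to project onto the $\Z$-component and split into cases based on whether this projection is trivial or not.

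Let $\pi\colon G\to\Z$ be the projection along $H$, and set $K:=G'\cap H$, which is clearly a subgroup of $H$. Since $\pi(G')$ is a subgroup of $\Z$, it is either trivial or equal to $d\Z$ for some positive integer $d$. The first step is to handle the trivial case: if $\pi(G')=\{0\}$, then $G'\seq H$, so $G'=G'\cap H=K$, and we may take $g=0$, giving $G'=\<g\>+K$ trivially.

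In the main case $\pi(G')=d\Z$ with $d>0$, choose any $g\in G'$ with $\pi(g)=d$. The inclusion $\<g\>+K\seq G'$ is immediate since both $g$ and $K$ lie in $G'$. For the reverse inclusion, take any $x\in G'$; then $\pi(x)=kd$ for some $k\in\Z$, so $\pi(x-kg)=0$, which means $x-kg\in G'\cap H=K$. Hence $x\in\<g\>+K$, establishing $G'=\<g\>+K$.

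There is no real obstacle here; the argument is the standard one for describing subgroups of $\Z\oplus H$ via their image under the projection to $\Z$, combined with the kernel $G'\cap H$. Finiteness of $H$ is not even needed for this decomposition, though it is part of the ambient hypothesis of the lemma.
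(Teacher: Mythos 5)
Your proof is correct and follows essentially the same route as the paper's: project $G'$ to $\Z$, observe the image is $d\Z$, pick $g\in G'$ over the generator $d$, and use $K=G'\cap H$ as the kernel to account for the rest. The paper phrases the reverse inclusion in terms of the ``slices'' $G'(k)=G'\cap(k+H)$ being translates of one another, but that is just a more verbose packaging of your direct computation $x-kg\in K$; the underlying argument is identical. Your closing observation that finiteness of $H$ is not needed is also accurate.
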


\begin{proof}
The assertion is immediate if $G'\le H$; assume therefore that $G'\not\le H$.
In this case the projection of $G'$ onto $\Z$ along $H$ is a non-zero
subgroup of $\Z$; let $z'$ be its generator. For $k\in\Z$, the ``slice''
$G'(k):=G'\cap(k+H)$ is non-empty if and only if $z'\mid k$. Furthermore, for
any $k_1,k_2$ divisible by $z'$, and any fixed $d\in G'(k_2-k_1)$, we have
$G'(k_1)+d\seq G'(k_2)$. This shows that all slices $G'(k)$ with $k$
divisible by $z'$ are actually translates of each other; hence, each of them
is a coset of the subgroup $K:=G'(0)\le H$.

Fix arbitrarily $g\in G'(z')$. For any integer $k$ divisible by $z'$, we have
$(k/z')g\in G'\cap(k+H)=G'(k)$. It follows that $G'(k)=(k/z')g+K$ for any
integer $k$ with $z'\mid k$. As a result, $G'=\<g\>+K$.
\end{proof}

We need the following lemma in the spirit of~\refb{bp}.
\begin{lemma}\label{l:bpBC}
Suppose that $B$ and $C$ are finite, nonempty integer sets, and write
$m:=|B|$ and $B=\{b_1\longc b_m\}$, where the elements of $B$ are numbered in
an arbitrary order. Then there exist $c_2\longc c_m\in C$ such that the sums
$b_2+c_2\longc b_m+c_m$ are distinct from each other and from the elements of
the set $b_1+C$.
\end{lemma}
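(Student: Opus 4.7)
The plan is to exploit the linear order on $\Z$: for each $i\ge 2$ I will set $c_i$ equal to either $\max C$ or $\min C$, choosing which based on how $b_i$ compares to $b_1$. Since $|B|=m$ the elements $b_1\longc b_m$ are all distinct, so each index $i\ge 2$ falls into exactly one of the classes $I_+:=\{i\ge 2\colon b_i>b_1\}$ or $I_-:=\{i\ge 2\colon b_i<b_1\}$. I then define $c_i:=\max C$ for $i\in I_+$ and $c_i:=\min C$ for $i\in I_-$.

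With this construction in hand, three disjointness statements need to be verified. Within each of $I_+$ and $I_-$ the sums $b_i+c_i$ are automatically distinct because the $b_i$'s are. Next, if $b_i+\max C=b_1+c$ with $i\in I_+$ and $c\in C$, then $c=(b_i-b_1)+\max C>\max C$, contradicting $c\in C$; the symmetric argument rules out $b_i+\min C\in b_1+C$ for $i\in I_-$. Finally, a coincidence $b_i+\max C=b_j+\min C$ with $i\in I_+$ and $j\in I_-$ would give $b_i-b_j=\min C-\max C\le 0$, contradicting $b_i>b_1>b_j$.

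I do not foresee any genuine obstacle. The entire content of the lemma is the observation that $c_i$ should be drawn from the extreme of $C$ opposite to the sign of the shift $b_i-b_1$, after which each verification is an immediate one-line inequality. Presumably this is why the lemma is introduced as being ``in the spirit of~\refb{bp}'': it is an elementary order-theoretic fact about integer sumsets, not requiring Kneser's theorem or any of the other machinery of Section~\refs{general}.
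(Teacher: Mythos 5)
Your proof is correct, but it takes a genuinely different route from the paper. The paper builds a family of $m+n-1$ translated copies of $C$ (with $b_1+C$ repeated $n=|C|$ times and $b_2+C,\dotsc,b_m+C$ appearing once each), verifies Hall's marriage condition via the sumset bound $|B'+C|\ge|B'|+|C|-1$, and extracts the desired $c_2,\dotsc,c_m$ from a system of distinct representatives. You instead construct the $c_i$ explicitly by a greedy use of the total order on $\Z$: send $b_i$ above $b_1$ to $\max C$ and below $b_1$ to $\min C$, and check the three disjointness claims by one-line sign arguments. Both arguments are complete; let me note one edge case you handle correctly but silently: when $|C|=1$ the cross-class comparison $b_i+\max C\ne b_j+\min C$ still holds because $b_i-b_j>0=\min C-\max C$, so the non-strictness of $\min C-\max C\le 0$ never bites. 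The trade-off is this: your argument is shorter and requires no combinatorial machinery, but it leans on the linear order of $\Z$ in an essential way. The paper's Hall-type argument uses only the inequality $|B'+C|\ge|B'|+|C|-1$, and thus transfers verbatim to any setting where that estimate holds for all nonempty subsets (e.g.\ the image of a projection in a more general torsion-free situation), which is likely why the author kept the formulation ``in the spirit of'' Balasubramanian--Pandey rather than replacing it with the elementary order-theoretic observation you found.
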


\begin{proof}
The proof follows the line of reasoning of~\refb{bp}.

Let $n:=|C|$ and consider the family of $m+n-1$ sets
\begin{align*}
  & b_1+C\longc b_1+C\quad\text{($n$ sets)} \\
  & b_2+C\longc b_m+C\quad\text{($m-1$ sets)}.
\end{align*}
Following Balasubramanian and Pandey, we use the Hall marriage theorem to
show that this set family has a system of distinct representatives; clearly,
this will imply the result.

Suppose thus that for some $1\le k\le m+n-1$ we are given a subsystem $\cS$
of $k$ sets from among those listed above, and show, to verify the hypothesis
of Hall's theorem, that $|\cup_{S\in\cS} S|\ge k$. Let $B'\seq B$ consist of
all those elements $b_i\in B\ (1\le i\le m)$ such that at least one of the
sets in $\cS$ has the form $b_i+C$. Then $\cup_{S\in\cS}=B'+C$ and we thus
want to show that $|B'+C|\ge k$. Since $|B'+C|\ge|B'|+|C|-1$, it suffices to
show that $|B'|+n-1\ge k$. Indeed, this inequality is trivial for $k\le n$,
while for $k\ge n$ it becomes evident upon writing $k=n+\kap$, $\kap\ge 0$
and observing any $n+\kap$ sets under consideration determine at least
$\kap+1=k-n+1$ elements $b_i$.
\end{proof}

\begin{corollary}\label{c:BPdist}
Suppose that the abelian group $G$ has the direct sum decomposition
$G=\Z\oplus H$ with $H<G$ finite. Let $B,C$ be finite, nonempty subsets of
$G$. If $m$ and $n$ denote the sizes of the images of $B$ and $C$,
respectively, under the projection $G\to\Z$ along $H$, then
  $$ |B+C| \ge \Big(1+\frac{n-1}m\Big)\,|B|. $$
\end{corollary}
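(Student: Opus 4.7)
The plan is to lift Lemma~\refl{bpBC} from $\Z$ to the group $G=\Z\oplus H$ by projecting to the torsion-free component and then slicing $B+C$ into $H$-cosets. Let $\pi\colon G\to\Z$ denote the projection along $H$, and for each $z\in\pi(B)$ write $B_z:=B\cap(z+H)$, so that $|B|=\sum_{z\in\pi(B)}|B_z|$; define $C_w$ analogously for $w\in\pi(C)$.

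The first step is an averaging observation: among the $m$ fibers of $B$ over $\pi(B)$, at least one satisfies $|B_{z_1}|\ge|B|/m$, so I enumerate $\pi(B)=\{z_1\longc z_m\}$ with this largest fiber placed first. Next I apply Lemma~\refl{bpBC} to the integer sets $\pi(B)$ and $\pi(C)$, using this particular enumeration; this produces elements $w_2\longc w_m\in\pi(C)$ such that the sums $z_2+w_2\longc z_m+w_m$ are pairwise distinct and all distinct from the $n$ elements of $z_1+\pi(C)$. Altogether this yields $n+(m-1)$ pairwise distinct elements of $\pi(B+C)$, and hence $n+(m-1)$ pairwise disjoint $H$-cosets meeting $B+C$.

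To finish, I lower-bound the $B+C$-population of each of these cosets. Each $z_1+w$ with $w\in\pi(C)$ indexes a coset meeting $B+C$ in at least $|B_{z_1}+C_w|\ge|B_{z_1}|$ points (translate $B_{z_1}$ by any single element of the nonempty $C_w$), while each $z_i+w_i$ ($i\ge 2$) indexes a coset meeting $B+C$ in at least $|B_{z_i}|$ points. Summing and using the averaging bound $|B_{z_1}|\ge|B|/m$ yields
$$ |B+C| \ge n|B_{z_1}| + \sum_{i=2}^m|B_{z_i}| = (n-1)|B_{z_1}| + |B| \ge \Big(1+\frac{n-1}m\Big)|B|, $$
which is the claimed inequality.

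No substantive obstacle is expected: Lemma~\refl{bpBC} already does the combinatorial heavy lifting, and the corollary is essentially a fibering argument. The only point requiring care is to order $\pi(B)$ so that the fiber amplified by the factor $n$ in the $z_1+\pi(C)$ row is as large as possible; without this optimization the averaging step would deliver only the weaker bound $|B+C|\ge|B|+(n-1)\min_z|B_z|$.
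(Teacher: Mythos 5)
Your proposal is correct and is essentially the same argument as the paper's: apply Lemma~\refl{bpBC} to the projections $\psi(B)$ and $\psi(C)$, with the element $b_1$ (your $z_1$) chosen to maximize its fiber so that $|B_{z_1}|\ge|B|/m$, and then count the contributions of the resulting disjoint $H$-cosets to $B+C$. The fiber-wise lower bound and the final averaging step match the paper exactly.
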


\begin{proof}
Denote by $\psi$ the projection in question, and write $\psi(B):=\{b_1\longc
b_{m}\}$, where $b_1$ is chosen so that $|\psi^{-1}(b_1)\cap B|\ge |B|/m$;
otherwise, the elements of $\psi(B)$ are numbered arbitrarily. Let
$B_i:=\psi^{-1}(b_i)\cap B\ (1\le i\le m)$. By Lemma~\refl{bpBC} as applied
to the sets $\psi(B)$ and $\psi(C)$, there are (not necessarily distinct)
elements $c_2\longc c_m\in\psi(C)$ such that all sums $b_2+c_2\longc b_m+c_m$
are distinct from each other and from the elements of the set $b_1+\psi(C)$.
Consequently, the sumsets $B_2+(\psi^{-1}(c_2)\cap C)\longc
B_m+(\psi^{-1}(c_m)\cap C)$ are pairwise disjoint, and they are also disjoint
from each of the $n$ sumsets $B_1+(\psi^{-1}(c)\cap C),\ c\in\psi(C)$. As a
result,
\begin{multline*}
  |B+C| \ge \sum_{i=2}^m |B_i+(\psi^{-1}(c_i)\cap C)|
           + \sum_{c\in\psi(C)}|B_1+(\psi^{-1}(c)\cap C)| \\
         \ge |B_2|\longp|B_m| + n|B_1|
            = |B|+(n-1)|B_1| \ge |B| + \frac{n-1}{m}\,|B|.
\end{multline*}
\end{proof}

\section{Basic Estimates}\label{s:basic}

We collect in this section several basic estimates used in the proof of
Theorem~\reft{small}.

Suppose that $A$ is a finite subset of the group $G=\Z\oplus H$, where $H<G$
is finite abelian. For each $z\in\Z$, let $A_z:=A\cap(z+H)$, and write
$B:=\{z\in\Z\colon A_z\ne\est\}$; that is, $B$ is the image of $A$ under the
projection of $G$ onto $\Z$ along $H$. Suppose, furthermore, that $\min B=0$,
$\max B=l>0$, $0\in A_0$, and $\del\in A_l$. Finally, write $n:=|B|$,
$\sig:=|A_0|+|A_l|$, and $A^*:=A_0\cap(A_l-\del)$.
\begin{lemma}\label{l:A0Al}
We have $|2A|+|A^\ast|\ge\sig n$.
\end{lemma}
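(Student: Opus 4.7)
The plan is to decompose $|2A|$ into the sizes of its fibres over the projection $G\to\Z$ along $H$, lower-bound each fibre, and sum. For each $z\in\Z$ let $(2A)_z:=2A\cap(z+H)$, so that $|2A|=\sum_{z\in 2B}|(2A)_z|$.

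First I would identify $2n-1$ fibres that admit easy lower bounds. For each $b\in B$ we have $A_0+A_b\seq(2A)_b$ and $A_l+A_b\seq(2A)_{b+l}$; since each $A_b$ is nonempty and $A_0+A_b$ contains a translate of $A_0$ (and similarly $A_l+A_b$ contains a translate of $A_l$), these inclusions give $|(2A)_b|\ge|A_0|$ and $|(2A)_{b+l}|\ge|A_l|$. Because $B\seq[0,l]$ with $\min B=0$ and $\max B=l$, a quick check shows $B\cap(l+B)=\{l\}$; thus the index sets $B\stm\{l\}$, $\{l\}$, and $(l+B)\stm\{l\}$ are pairwise disjoint and identify $2n-1$ distinct fibres of $2A$. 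The first family contributes at least $(n-1)|A_0|$ and the third at least $(n-1)|A_l|$ to $|2A|$.

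The key step is to extract the correction $-|A^*|$ from the overlap fibre at $z=l$. Since $0\in A_0$ and $\del\in A_l$, the sumset $A_0+A_l$ contains both $A_l$ and $A_0+\del$, each a subset of $l+H$. Two-set inclusion--exclusion then gives
\begin{equation*}
  |A_l\cup(A_0+\del)|=|A_l|+|A_0|-|A_l\cap(A_0+\del)|=\sig-|A^*|,
\end{equation*}
where the final equality uses $A_l\cap(A_0+\del)=\del+(A_0\cap(A_l-\del))=\del+A^*$. Hence $|(2A)_l|\ge\sig-|A^*|$.

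Summing the three contributions gives
\begin{equation*}
  |2A|\ge(n-1)|A_0|+\bigl(\sig-|A^*|\bigr)+(n-1)|A_l|=\sig n-|A^*|,
\end{equation*}
which is the claimed inequality. No serious obstacle is anticipated: the only subtle point is making the overlap row yield the correction term $|A^*|$, which the inclusion--exclusion handles in one line; everything else is a clean fibre-wise count using the endpoint structure of $B$.
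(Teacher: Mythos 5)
Your proposal is correct and follows essentially the same route as the paper: both decompose $|2A|$ over fibres of the projection indexed by $B$ and $l+B$, lower-bound the $n-1$ fibres at $z\in B\setminus\{l\}$ using $A_0+A_z$ and the $n-1$ fibres at $z\in(l+B)\setminus\{l\}$ using $A_{z-l}+A_l$, and then extract the $-|A^*|$ correction from the central fibre $(2A)_l$ via the inclusion--exclusion $|A_0+A_l|\ge|(A_0+\del)\cup A_l|=\sig-|A^*|$.
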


\begin{proof}
Considering the projections of the ``slices'' $A_b$ onto $\Z$, we get
\begin{align*}
  |2A| &\ge \sum_{\substack{z\in B \\ z<l}} |A_0+A_z|
           + |A_0+A_l| + \sum_{\substack{z\in B \\ z>0}}|A_z+A_l| \\
       &\ge (n-1)|A_0|+|A_0+A_l| + (n-1)|A_l|.
\end{align*}
To estimate the sum $A_0+A_l$ we notice that both $A_0+\del$ and $A_l$ are
subsets of $A_0+A_l$, whence
  $$ |A_0+A_l| \ge|(A_0+\del)\cup A_l|=(|A_0|+|A_l|)-|A^*|. $$
Combining these estimates yields the sought inequality.
\end{proof}

\begin{corollary}\label{c:3A2A}
Let $\tau:=|2A|/|A|$. If $\tau<3(1-\frac1n)$, then
\begin{gather}
  (3-\tau)(\tau |A|+|A^*|) > 3\sig, \label{e:0301b} \\
  3|A|-|2A| > \sig, \label{e:1001a}
\intertext{and}
  |2A| < 3|A| - 2|A^*|. \label{e:1912}
\end{gather}
\end{corollary}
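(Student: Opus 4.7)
The three inequalities are algebraic consequences of Lemma \refl{A0Al} together with the trivial observation that $|A^*|\le\min\{|A_0|,|A_l|\}\le\sigma/2$, and I would prove them in the listed order.

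For \refe{0301b}, the plan is simply to combine the two given inequalities multiplicatively. Lemma \refl{A0Al} supplies $\tau|A|+|A^*|\ge\sigma n$, while the hypothesis $\tau<3(1-\tfrac1n)$ is equivalent to $n(3-\tau)>3$. Multiplying these and noting that $3-\tau>0$ yields $(3-\tau)(\tau|A|+|A^*|)\ge(3-\tau)\sigma n>3\sigma$.

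For \refe{1001a}, I would use $|A^*|\le\sigma/2$ to eliminate $|A^*|$ from \refe{0301b}. This gives
$$3\sigma<(3-\tau)\tau|A|+(3-\tau)|A^*|\le(3-\tau)\tau|A|+\tfrac{3-\tau}{2}\sigma,$$
which rearranges to $\sigma(3+\tau)/2<(3-\tau)\tau|A|$, hence $\sigma<\frac{2\tau}{3+\tau}(3-\tau)|A|$. Since $\tau<3$ implies $2\tau/(3+\tau)<1$, I obtain $\sigma<(3-\tau)|A|=3|A|-|2A|$, as required.

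Finally, \refe{1912} is immediate from \refe{1001a}: rewriting its conclusion as $3|A|-|2A|>\sigma\ge 2|A^*|$ and rearranging gives $|2A|<3|A|-2|A^*|$. There is no real obstacle here; the only thing to be careful about is the direction of the inequality $|A^*|\le\sigma/2$, which follows because $A^*\subseteq A_0$ and $A^*+\delta\subseteq A_l$ are disjoint subsets of the (disjoint) union $A_0\cup A_l$, so that $2|A^*|\le|A_0|+|A_l|=\sigma$.
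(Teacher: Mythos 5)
Your proposal is correct and follows essentially the same route as the paper: multiply Lemma~\refl{A0Al} by $n(3-\tau)>3$ to get~\refe{0301b}, feed the bound $|A^*|\le\sigma/2$ back in (together with $\tau<3$) to get~\refe{1001a}, and then deduce~\refe{1912} from~\refe{1001a} via $\sigma\ge 2|A^*|$. The middle step of~\refe{1001a} is arranged slightly differently (you collect terms before comparing to $(3-\tau)|A|$, the paper divides by $\tau$ first), but this is only a cosmetic rearrangement of the same algebra.
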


\begin{proof}
To prove~\refe{0301b}, we multiply the inequality of the lemma by the
inequality $3-\tau>\frac3n$ following from $\tau<3(1-\frac1n)$, and then
substitute $|2A|=\tau|A|$.

For~\refe{1001a}, we use~\refe{0301b} to get
\begin{multline*}
  3|A|-|2A| = (3-\tau)|A| > \frac1\tau\,(3\sig-(3-\tau)|A^*|)
     = \frac3\tau\,\sig-\left(\frac3\tau-1\right)|A^*| \\
      \ge \frac3\tau\,\sig-\left(\frac3\tau-1\right)\cdot\frac\sig2
          = \frac12\left(\frac3\tau+1\right)\sig > \sig.
\end{multline*}
Finally,~\refe{1912} follows from
  $$ |2A| < 3|A| - \sig \le 3|A| - 2|A^*|. $$
\end{proof}

\section{The two-coset case}\label{s:twocoset}

In this section we prove a result which is easily seen to imply the special
case of Theorem~\reft{small} where the set $A$ is contained in a union of two
cosets of a subgroup $F<G$ (but not contained in a single coset of either $F$
or a subgroup containing $F$ as an index-$2$ subgroup).

\begin{proposition}\label{p:N=1}
Suppose that the abelian group $G$ has the direct sum decomposition
$G=\Z\oplus H$ with $H<G$ finite. Let $A_1,A_2\sbs G$ be finite, nonempty
subsets of $G$, and for $i\in\{1,2\}$ let $n_i:=|\psi(A_i)|$, where
$\psi\colon G\to\Z$ is the projection along $H$. Then
  $$  |2A_1|+|A_1+A_2|+|2A_2|
                         \ge 3\Big(1-\frac1{n_1+n_2}\Big)(|A_1|+|A_2|). $$
\end{proposition}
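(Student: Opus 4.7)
My approach is to deduce Proposition~\refp{N=1} directly from Corollary~\refc{BPdist} by combining three applications of it with judiciously chosen weights.

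First, applying Corollary~\refc{BPdist} with $B=C=A_i$ gives, for $i\in\{1,2\}$, the lower bound
$$ |2A_i| \ge \Big(2-\frac1{n_i}\Big)|A_i|. $$
Applying the corollary with $(B,C)=(A_1,A_2)$ and then with $(B,C)=(A_2,A_1)$ yields the two incomparable bounds
$$ |A_1+A_2| \ge \Big(1+\frac{n_2-1}{n_1}\Big)|A_1|
   \quad\text{and}\quad
   |A_1+A_2| \ge \Big(1+\frac{n_1-1}{n_2}\Big)|A_2|. $$
For any $\alpha\in[0,1]$, the $\alpha$-weighted convex combination of the last two bounds is again a valid lower bound for $|A_1+A_2|$; adding it to the two $|2A_i|$-bounds produces a lower bound on the left-hand side of Proposition~\refp{N=1} whose coefficients of $|A_1|$ and $|A_2|$ are explicit functions of $\alpha$.

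The strategy is then to choose $\alpha$ so that both of these coefficients equal exactly $3-\frac{3}{n_1+n_2}$, i.e.\ the coefficient appearing on the right-hand side of the proposition. Each matching requirement is a linear equation in $\alpha$, and a direct calculation shows that both equations are solved by the same value
$$ \alpha = \frac{n_1^2+n_1n_2-2n_1+n_2}{(n_1+n_2)(n_1+n_2-1)}, $$
which is easily seen to lie in $[0,1]$ for all positive integers $n_1,n_2$. Summing the three bounds with this choice of $\alpha$ then yields the inequality of Proposition~\refp{N=1}.

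The only delicate point is the piece of algebra verifying that the two coefficient-matching conditions --- arising from the coefficients of $|A_1|$ and of $|A_2|$ separately --- do share a common solution in $[0,1]$; this mild ``coincidence'' is what makes the weighted-sum approach succeed. Beyond it no further combinatorial or structural input is required, and in particular neither Kneser's theorem nor the auxiliary lemmas of Section~\refs{basic} appears to be needed for this proposition.
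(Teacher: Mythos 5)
Your proof is correct, and it takes a genuinely different route from the paper's. The paper first derives $|2A_i|\ge(2-\frac1{n_i})|A_i|$ and $|A_1+A_2|\ge(1-\frac1{n})(|A_1|+|A_2|)$ from Kneser's theorem, and then splits into two cases according to whether $|A_1|/n_1\le|A_2|/n_2$ or not, invoking Corollary~\refc{BPdist} in one case and the Kneser-based bound \refe{sumA1A2} in the other. You instead use Corollary~\refc{BPdist} for all three quantities and replace the case split by a single convex combination in the $|A_1+A_2|$ term, whose weight $\alpha$ is forced by matching the coefficient of $|A_1|$; the ``coincidence'' that the coefficient of $|A_2|$ is then matched automatically is real but not mysterious --- writing $n:=n_1+n_2$, your two conditions read $\alpha=\frac{n_1n+n-3n_1}{n(n-1)}$ and $1-\alpha=\frac{n_2n+n-3n_2}{n(n-1)}$, and these add up to $1$ identically, so they are compatible; and $\alpha\in[0,1]$ follows from $n_1\le n-1$ and $n\ge2$. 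What your argument buys is elimination of Kneser's theorem from this particular proposition (Corollary~\refc{BPdist} rests only on Hall's theorem and the trivial integer sumset bound) and a uniform one-line conclusion instead of a case analysis; what the paper's route buys is that the auxiliary inequality \refe{sumA1A2} is also used in forming intuition about why the constant $3(1-\frac1n)$ is the correct threshold. Both proofs are tight against Example~1, which is why the coefficient-matching must succeed.
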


\begin{example}
If, for $i\in\{1,2\}$, we let $A_i=P_i+K$, where $P_i$ are arithmetic
progressions with the same difference not contained in $H$, and where $K\le
H$, then $n_i=|P_i|$ and
  $$ |2A_1|+|A_1+A_2|+|2A_2| = 3(|P_1|+|P_2|-1)|K|
        = 3\big(1-\frac1{n_1+n_2}\big)(|A_1|+|A_2|). $$
This shows that the estimate of the proposition is best possible.
\end{example}

\begin{proof}[Proof of Proposition~\refp{N=1}]
Recall that for a subset $S$ of an abelian group, by $\pi(S)$ we denote the
period of $S$; see Section~\refs{kneser}.

For $i\in\{1,2\}$, we have $\pi(2A_i)\le H$ (as $2A_i$ are finite), and
$|\psi(2A_i)|\ge 2n_i-1$, whence
  $$ |2A_i| \ge (2n_i-1)\,|\pi(2A_i)|. $$
On the other hand, by Kneser's theorem (Section~\refs{kneser})
  $$ |2A_i| \ge 2|A_i|-|\pi(2A_i)|. $$
Multiplying the latter inequality by $2n_i-1$ and adding the former to the
result (to cancel out the term $|\pi(2A_i)|$) we get
\begin{equation}\label{e:0410}
 |2A_i|\ge \Big(2-\frac1{n_i}\Big)|A_i|.
\end{equation}
Similarly, letting $n:=n_1+n_2$ and observing that
  $$ |\psi(A_1+A_2)| = |\psi(A_1)+\psi(A_2)| \ge n_1+n_2-1 = n-1, $$
we get $|A_1+A_2|\ge(n-1)|\pi(A_1+A_2)|$ and
$|A_1+A_2|\ge|A_1|+|A_2|-|\pi(A_1+A_2)|$, implying
\begin{equation}\label{e:sumA1A2}
  |A_1+A_2| \ge \Big(1-\frac1{n}\Big)(|A_1|+|A_2|).
\end{equation}

In view of~\refe{0410}, it suffices to show that
\begin{equation}\label{e:0610a}
  |A_1+A_2| \ge \Big(1+\frac1{n_1}-\frac3n \Big)\,|A_1|
                                  + \Big(1+\frac1{n_2}-\frac3n \Big)\,|A_2|.
\end{equation}
Assuming for definiteness that $n_1\le n_2$, we distinguish two cases.

If $|A_1|/n_1\le|A_2|/n_2$, then we apply~\refe{sumA1A2}, reducing the
inequality to prove to
  $$ \Big(1-\frac1{n}\Big)(|A_1|+|A_2|)
            \ge \Big(1+\frac1{n_1}-\frac3n \Big)\,|A_1|
                               + \Big(1+\frac1{n_2}-\frac3n \Big)\,|A_2|. $$
This can be rewritten as
  $$ \frac1{n_1}\,|A_1| + \frac1{n_2}\,|A_2|
                                       \le 2\,\frac{|A_1|+|A_2|}{n_1+n_2} $$
and, furthermore, as
  $$ (n_1-n_2)\Big(\frac{|A_1|}{n_1}-\frac{|A_2|}{n_2}\Big) \ge 0, $$
which is true by our present assumptions $n_1\le n_2$ and $|A_1|/n_1\le
|A_2|/n_2$.

Assume now that, in addition to $n_1\le n_2$, we have
\begin{equation}\label{e:0510supp}
  |A_1|/n_1 > |A_2|/n_2.
\end{equation}
By Corollary~\refc{BPdist} (applied with $B=A_1$ and $C=A_2$),
  $$ |A_1+A_2| \ge \frac{n-1}{n_1}\,|A_1|. $$
Substituting this estimate into~\refe{0610a}, we see that it suffices to
prove that
  $$ \frac{n-1}{n_1}\,|A_1| \ge \Big(1+\frac1{n_1}-\frac3n \Big)\,|A_1|
                               + \Big(1+\frac1{n_2}-\frac3n \Big)\,|A_2|; $$
that is,
  $$ \Big(\frac{n-2}{n_1}-1+\frac3n\Big)\,|A_1|
                             \ge \Big(1+\frac1{n_2}-\frac3n \Big)\,|A_2|. $$
In view of~\refe{0510supp}, this will follow from
  $$ \Big(\frac{n-2}{n_1}-1+\frac3n\Big)\,n_1
                                \ge \Big(1+\frac1{n_2}-\frac3n \Big)\,n_2 $$
which is easily verified to hold (as an equality, in fact).
\end{proof}

\section{Proof of Theorem~\reft{small}}\label{s:proof}

Recall that we have a finite subset $A$ of the abelian group $G=\Z\oplus H$,
where $H\le G$ is finite. We assume that $|2A|<3(1-\frac1n)|A|$, where $n$ is
the number of elements in the image of $A$ under the projection $G\to\Z$
along $H$, and we want to show that there exist an arithmetic progression
$P\seq G$ and a subgroup $K\le H$ such that $A\seq P+K$ and
$(|P|-1)|K|\le|2A|-|A|$. As shown in Section~\refs{intro}, the estimates
$3\le |P|\le(\tau-1)n+1$ and the equality $|P+K|=|P||K|$ follow automatically
and we disregard them for the rest of the proof. Here $\tau$ is defined by
$|2A|=\tau|A|$, so that $\tau<3(1-\frac1n)$.

Let $\psi\colon G\to\Z$ be the projection mentioned in the previous
paragraph. Without loss of generality we assume that $0\in A$ and
$\min\psi(A)=0$, and we let $l:=\max\psi(A)$; thus, $A\cap(z+H)=\est$ for
$z<0$ and also for $z>l$, while the sets $A_0:=A\cap H$ and $A_l:=A\cap(l+H)$
are nonempty.

Fix arbitrarily an element $\del\in A_l$, and let $A^*:=A_0\cap(A_l-\del)$
and $\sig:=|A_0|+|A_l|$. Notice that $0\in A^\ast$, $\sig\ge 2|A^*|$, and
$|A_0\cup(A_l-\del)|=\sig-|A^*|$.

For a subgroup $L\le G$, by $\phi_L$ we denote the canonical homomorphism of
$G$ onto the quotient group $G/L$. Let $\Del:=\<\del\>\le G$. We adopt a
special notation for the homomorphism $\phi_\Del$, which is particularly
important for our argument: whenever $s$ denotes an element of $G$, by $\os$
we denote the image of $s$ under $\phi_\Del$, and similarly for sets:
$\oS=\phi_\Del(S),\ S\seq G$. Thus, for instance, $\oA=\phi_\Del(A)$ and
$\o{2A}=\phi_\Del(2A)=2\oA$.

To make the proof easier to follow, we split it into several parts.

\subsection{Deficiency and the induction framework}\label{s:def}
We use induction on $|H|$, the base case $|H|=1$ being Freiman's
$(3n-3)$-theorem (see Section~\refs{intro}). Suppose that $|H|\ge 2$.

Given a subset $S\seq G$ and a subgroup $L\le G$, both finite, we define the
\emph{deficiency} of $S$ on a coset $g+L\seq G$ by
  $$ \dfc(S,g+L) :=
       \begin{cases}
         |(g+L)\stm S|\ &\text{if}\ S\cap(g+L)\ne\est, \\
         0            \ &\text{if}\ S\cap(g+L)=\est;
       \end{cases} $$
notice that in the first case we can also write
  $\dfc(S,g+L)=|L|-|(g+L)\cap S|$.
The \emph{total deficiency} of $S$ with respect to $L$ is
  $$ \Dfc(S,L) := |(S+L)\stm S|; $$
equivalently,
  $$ \Dfc(S,L) = \sum_{g+L} \dfc(S,g+L), $$
where the sum extends over all $L$-cosets having a nonempty intersection with
$S$.

Suppose that $L\le H$ is a nonzero finite subgroup with
\begin{equation}\label{e:0201a}
 \Dfc(2A,L) \le \Dfc(A,L).
\end{equation}
Then, letting $T:=3(1-\frac1n)$,
\begin{equation*}
  |2(A+L)| \le |A+L| + |2A| - |A| < |A+L| + (T-1)|A| \le T|A+L|;
\end{equation*}
that is, writing $\tG:=G/L\cong(H/L)\oplus\Z$, $\tA:=\phi_L(A)$, and
$\widetilde{2A}:=\phi_L(2A)$, we have $|2\tA|<3(1-\frac1n)|\tA|$. Applying
the induction hypothesis to the subset $\tA\seq\tG$, we conclude that there
are an arithmetic progression $\tP\seq\tG$ and a subgroup $\tK\le\tH:=H/L$
such that $\tA\seq\tP+\tK$ and $(|\tP|-1)|\tK|\le|2\tA|-|\tA|$. Let
$K:=\phi_L^{-1}(\tK)$; thus, $L\le K\le H$ and $|K|=|L||\tK|$. Also, it is
easily seen that $\phi_L^{-1}(\tP)=P+L$ where $P\seq G$ is an arithmetic
progression with $|P|\le|\tP|$. From $\tA\seq\tP+\tK$ we derive then that
$A\seq P+K$, and from $(|\tP|-1)|\tK|\le|2\tA|-|\tA|$ we get
\begin{multline*}
  (|P|-1)|K| \le (|\tP|-1)|\tK||L| \le (|2\tA|-|\tA|)|L| \\
                  = |2A+L| - |A+L| = |2A|+\Dfc(2A,L)
                                            - |A| - \Dfc(A,L) \le |2A|-|A|,
\end{multline*}
completing the induction step.

Of particular interest is the situation where $L$ is a nonzero, finite
subgroup satisfying
\begin{equation}\label{e:0201b}
  \Dfc(A,L)\le |L|-1.
\end{equation}
Let in this case $m$ denote the number of $L$-cosets on which $A$ has
positive deficiency, and fix $a_1\longc a_m\in A$ such that $a_1+L\longc
a_m+L$ list all these cosets. It follows easily from~\refe{0201b} that there
is at most one pair of indices $1\le i\le j\le m$ such that
$\dfc(A,a_i+L)+\dfc(A,a_j+L)\ge|L|$, and if such a pair exists, then in fact
$i=j$. By the pigeonhole principle, we have then $\dfc(2A,g+L)=0$ for every
coset $g+L$, with the possible exception of one single $L$-coset which is
then of the form $2a+L$, with some $a\in A$. This yields
  $$ \Dfc(2A,L) = \dfc(2A,2a+L) \le \dfc(A,a+L) \le \Dfc(A,L). $$
Clearly, the resulting estimate
  $$ \Dfc(2A,L) \le \Dfc(A,L) $$
remains valid also if there are no exceptional $L$-cosets.

Thus, once we are able to find a nonzero finite subgroup $L<H$ satisfying
either~\refe{0201a} or~\refe{0201b}, we can complete the proof applying the
induction hypothesis.

As a result, we can assume that for any nonempty subsets $A',A''\seq A$ with
$A=A'\cup A''$,
\begin{equation*}\label{e:0701a}
  |A'+A''|\ge|A'|+|A''|-1;
\end{equation*}
for if this fails to hold, then letting $L:=\pi(A'+A'')$, by Kneser's theorem
we have $|L|\ge 2$ and $|A'+L|+|A''+L|-|L|=|A'+A''|\le |A'|+|A''|-2$, whence
  $$ \Dfc(A,L) \le \Dfc(A',L)+\Dfc(A'',L) \le |L|-2 $$
(for the first inequality, notice that
$\dfc(A,g+L)\le\dfc(A',g+L)+\dfc(A'',g+L)$ for any coset $g+L$, which follows
from the assumption $A',A''\seq A=A'\cup A''$).

In particular, we assume that $|A+S|\ge|A|+|S|-1$ for any nonempty subset
$S\seq A$.

As an important special case,
\begin{equation}\label{e:3112}
  |A+A^*| \ge |A| + |A^*| - 1.
\end{equation}

\ifthenelse{\equal{\showmorestuff}{yes}}%
{{\color{brown}%
\begin{remark}
Similarly, if there exists a finite, nonempty subset $S\seq G$ with
$|A+S|<|A|+|S|-1$, then letting $L:=\pi(A+S)$, by Kneser's theorem we have
$|L|\ge 2$ and $|A+L|+|S+L|-|L|=|A+S|\le|A|+|S|-2$, whence
$\Dfc(A,L)=|A+L|-|A|\le|L|-2$. Therefore, we can in fact assume that
$|A+S|\ge|A|+|S|-1$ holds for any finite, nonempty subset $S\seq G$, not
necessarily contained in $A$. This observation, however, does not seem to be
particularly helpful.
\end{remark}}
}{} 

\subsection{The set $\oA$ has small doubling}\label{wraparound}
The quantity $|A^*|$ can be interpreted as the number of representations of
$\del$ as a difference of two elements of $A$. Generally, for a set $S\seq G$
and an element $g\in G$, denote by $r_S(g)$ the number of representations of
$g$ as a difference of two elements of $S$; thus, for instance,
$|A^*|=r_A(\del)$. Clearly, every $\Del$-coset intersects $A$ by at most two
elements, and if the intersection contains \emph{exactly} two elements, then
the two elements differ by $\del$. It follows that
\begin{equation}\label{e:notused}
  |A| = |\oA| + r_A(\del) = |\oA| + |A^*|.
\end{equation}
Similarly, since $\os_1=\os_2$ for any $s_1,s_2\in 2A$ with $s_2-s_1=\del$,
we have $|2A|\ge|2\oA|+r_{2A}(\del)$. Furthermore,
$r_{2A}(\del)\ge|A+A^\ast|$ as to any $a\in A$ and $a^\ast\in A^\ast$ there
corresponds the representation $((a^\ast+\del)+a)-(a^\ast+a)=\del$, and the
sum $a+a^*$ is uniquely determined by this representation. Therefore,
\begin{equation}\label{e:0101a}
  |2A| \ge |2\oA| + |A+A^*|.
\end{equation}

\ifthenelse{\equal{\showmorestuff}{yes}}%
{{\color{brown}
\begin{remark}
Yet another potentially useful observation in this spirit is as follows.
Suppose that $K\le H$ and $F\le G$ are subgroups such that $\oK=\oF$ and $K$
is finite. (The actual subgroups of interest are defined below; presently, we
do not need to know what they are.) Then
\begin{multline*}
  |A+K| = |\oA+\oK| + r_{A+K}(\del) \\
        = |\oA+\oF| + |(A_0+K)\cap(A_l-\del+K)| \ge |\oA+\oF| + |A^*+K|,
\end{multline*}
and it follows that
  $$ \Dfc(A,K) \ge \Dfc(\oA,\oF) + \Dfc(A^*,K). $$
\end{remark}
}}{} 

\ifthenelse{\equal{\showmorestuff}{yes}}%
{{\color{brown}%
\begin{remark}
In general, for any finite subset $S\seq G$ we have $|S|\ge|\oS|+r_S(\del)$.
Applying this with $S=2A$ and observing that
  $$ r_{2A}(\del)=|(2A)\cap(2A+\del)| \ge |A+(A\cap(A+\del))| = |A+A^*|, $$
we get $|2A|\ge|2\oA|+|A|+|A^*|-1$. On the other hand, from $|\oF|=|\oK|=|K|$
it follows that
  $$ |2\oA| = 2|\oA+\oF|-|\oF| \ge 2|\oA|-|K| = 2|A|-2|A^\ast|-|K|. $$
Combining the two last estimates,
\begin{equation*}\label{e:basic}
  |2A|-|A|-|A^*|+1 \ge |2\oA| \ge 2|A|-2|A^\ast|-|K|,
\end{equation*}
whence $(3-\tau)|A|\le|A^*|+|K|+1$.
\end{remark}
}}{} 

We now claim that
\begin{equation}\label{e:2oAisperiodic}
  |2\oA| < 2|\oA| - 1.
\end{equation}
In view of $|2\oA|\le |2A|-|A+A^*|\le\tau|A| - |A|-|A^*|+1$ and
$|\oA|=|A|-|A^*|$ (cf.~\refe{0101a}, \refe{3112}, and \refe{notused}), it
suffices to show that
  $$ \tau |A| - |A|-|A^*| + 1 < 2|A| - 2|A^*| - 1; $$
that is,
\begin{equation}\label{e:2oAtoshow}
  (3-\tau)|A| > |A^*| + 2.
\end{equation}
To this end we notice that, by~\refe{0301b} and in view of
$|A^*|\le\min\{|A_0|,|A_l|\}$,
  $$ (3-\tau)(\tau|A|+|A^*|) > 3(|A_0|+|A_l|) \ge 6|A^*|. $$
Consequently,
  $$ (3-\tau) |A| > \left(\frac3{\tau}+1\right) |A^*| > 2|A^*|, $$
which proves~\refe{2oAtoshow} in the case where $|A^*|\ge2$. In the remaining
case $|A^*|=1$, we obtain~\refe{2oAtoshow} as an immediate corollary of
$|A|\ge n$ and $\tau<3\left(1-\frac1n\right)$.

Thus, \refe{2oAisperiodic} is established, and from Kneser's theorem it
follows that the period $\oF:=\pi(2\oA)$ is a nonzero subgroup of the
quotient group $G/\Del$, and also, in view of
 $2|\oA+\oF|-|\oF|=|2\oA|\le 2|\oA|-2$, that
\begin{equation}\label{e:def}
  \Dfc(\oA,\oF) \le \frac12\,|\oF|-1.
\end{equation}
We let $F:=\phi_\Del^{-1}(\oF)$, so that $\oF=\phi_\Del(F)$ and
 $\Del\le F\le G$.
\ifthenelse{\equal{\showmorestuff}{yes}}%
{
\begin{remark}
The relation $2\oA=2\oA+\oF$ can be equivalently rewritten as $2A+\Del=2A+F$.
This shows that if an $F$-coset, say $\cF$, intersects $2A$, then indeed
every $\Del$-coset contained in $\cF$ intersects $2A$.
\end{remark}
}{} 

Observing that $0\in A$ implies $\oA+\oF\seq 2\oA+\oF=2\oA$, we denote by $N$
the number of $\oF$-cosets contained in $2\oA$, but not in $\oA+\oF$; that
is,
  $$ N = (|2\oA|-|\oA+\oF|)/|\oF|. $$
Combining $|2\oA|-|\oA+\oF|=N|\oF|$ and $|2\oA|=2|\oA+\oF|-|\oF|$, we get
\begin{equation}\label{e:2501a}
  |\oA+\oF|=(N+1)|\oF|\ \text{and}\ |2\oA|=(2N+1)|\oF|.
\end{equation}

Let $K:=F\cap H$.

\subsection{The case where $N=0$}
If $N=0$, then $\oA+\oF=2\oA$. Adding $\oA$ to both sides we get
$2\oA=2\oA+\oA$, showing that $\oA\seq\pi(2\oA)=\oF$. Combining this with
$\oA+\oF=2\oA$, we conclude that $2\oA=\oF$. Thus, $2A+\Del=F$ and,
by Lemma~\refl{module},
\begin{equation}\label{e:2101a}
  A \seq 2A + \Del = F = \<g\> + K
\end{equation}
with some $g\in G$. Notice that $g\notin H$, as otherwise we would have
$A\seq H$, and hence $n=1$.

Let $P:=\<g\>\cap\psi^{-1}\big([0,l]\big)$, so that $A\seq P+K$. Since
$\psi^{-1}\big([0,l)\big)$ contains exactly one representative out of every
$\Del$-coset, we also have
\begin{align*}
  |2\oA|
      &= |\phi_\Del(2A)| \\
      &= |\phi_\Del(2A+\Del)| \\
      &= \big|(2A+\Del)\cap\psi^{-1}\big([0,l)\big)\big| \\
      &= \big|\big(\<g\>+K\big)
                       \cap\psi^{-1}\big([0,l)\big)\big| \\
      &= \big|\<g\>\cap\psi^{-1}\big([0,l)\big)\big||K| \\
      &= \big|\<g\>\cap\psi^{-1}\big([0,l]\big)\big||K|
                 - \big|\<g\>\cap\psi^{-1}\big(l\big) ||K| \\
      &= (|P|-1) |K|,
\end{align*}
the middle equality following from~\refe{2101a}, and the last equality from
  $$ \est \ne A\cap \psi^{-1}(l)
            \seq (\<g\>+K) \cap \psi^{-1}(l)
                = (\<g\> \cap \psi^{-1}(l)) + K $$
and the resulting $\<g\> \cap \psi^{-1}(l)\ne\est$.
 Consequently,~\refe{0101a} yields
$(|P|-1)|K|\le|2A|-|A|$, completing the proof in the present case.

We thus assume for the remaining part of the argument t=hat $N>0$; that is
\begin{equation*}\label{e:oAoF2}
  \oA+\oF\subsetneq 2\oA.
\end{equation*}
Therefore, $2\oA$ is \emph{not} a subgroup (if it were, we would have
$\oF=\pi(2\oA)=2\oA$ implying $\oA+\oF\supseteq 2\oA$).

\subsection{The case where $N=1$}
If $N=1$, then $\oA+\oF$ is a union of exactly two $\oF$-cosets, and $2\oA$
is a union of exactly three $\oF$-cosets. Since $0\in A$, we derive that
$A=A_1\cup(g+A_2)$, where $A_1,A_2\seq F$ are nonempty and finite, and where
$g\in G$ satisfies $2g\notin F$, as a result of $2\oA$ being a union of three
$\oF$-cosets. Write $n_i:=|\psi^{-1}(A_i)|,\ i\in\{1,2\}$, so that
$n:=|\psi^{-1}(A)|\le n_1+n_2$. By Proposition~\refp{N=1}, we have then
\begin{align*}
  |2A| &=   |2A_1|+|A_1+A_2|+|2A_2| \\
       &\ge 3\Big(1-\frac1{n_1+n_2}\Big)(|A_1|+|A_2|) \\
       &\ge 3\Big(1-\frac1n\Big)|A|,
\end{align*}
a contradiction.

\medskip
Let $\oH:=\phi_\Del(H)$ and, following our standard convention, write
$\oK:=\phi_\Del(K)$. We split the remaining case $N\ge 2$ into two further
subcases: that where $\oK$ is a proper subgroup of $\oF$ (which, by
Corollary~\refc{FGHDel}, is equivalent to any of $\oF\not\le\oH$ and
$F\not\le H+\Del$), and that where $\oK=\oF$ (equivalently, $\oF\le\oH$,
$F\le H+\Del$, or $F=K\oplus\Del$).

\subsection{The case where $N\ge 2$ and $\oK\lneqq\oF$.}
We show that in this case
\begin{equation}\label{e:2show}
  |2A\stm A|\ge 2|\oA|;
\end{equation}
in view of~\refe{notused} and~\refe{1912}, this will give
  $$ |2A|-|A| \ge 2|\oA| = 2|A|-2|A^*| > 2|A|-(3-\tau)|A| = (\tau-1)|A|, $$
a contradiction.

To prove~\refe{2show}, we partition the elements $s\in2A\stm A$ into two
groups, according to whether $\os=\phi_\Del(s)$ lies in $\oA+\oF$.

For the first group we have the estimate
\begin{equation*}\label{e:oAplusoF}
  |\{ s\in 2A\stm A\colon \os\in\oA+\oF \}| \ge |\oA+\oF|;
\end{equation*}
for, $\oA+\oF\seq2\oA$ shows that for every element $\os\in\oA+\oF$, the set
$\{s\in 2A\colon\phi_\Del(s)=\o{s}\}$ is nonempty, and the (unique) element
of this set with the largest value of $\psi(s)$ does not lie in $A$ as $s\in
A$ implies $s+\del\in 2A$, because of $\del\in A$. (This argument shows that,
indeed, for any subset $\oS\seq2\oA$ there are at least $|\oS|$ elements
$s\in 2A\stm A$ such that $\os\in\oS$.)

To complete the treatment of the case $\oK\lneqq\oF$, we show that
  $$ T := |\{ s\in 2A\stm A\colon \os\notin\oA+\oF \}|
                                                 \ge 2|\oA| - |\oA+\oF|. $$
(Notice that the trivial estimate would be $T\ge|2\oA|-|\oA+\oF|$.)

The set $(2\oA)\stm(\oA+\oF)$ is a union of $\oF$-cosets, and we find
$a_1\longc a_N,b_1\longc b_N\in A$ such that the cosets in question are
$\oa_i+\ob_i+\oF$, $i\in[1,N]$.

Let
  $$ A_i :=A\cap(a_i+F)\ \text{and}
                                 \  B_i := A\cap(b_i+F),\quad i\in[1,N]. $$
By Corollary~\refc{FGHDel} we have $\oA_i=\oA\cap(\oa_i+\oF)$ and
$\oB_i=\oA\cap(\ob_i+\oF)$, and it follows that
\begin{equation}\label{e:Ailarge}
  |A_i| \ge |\oA_i| = |\oa_i+\oF| - |(\oa_i+\oF)\stm\oA|
                              = |\oF| - \dfc(\oA,\oa_i+\oF),\quad i\in[1,N]
\end{equation}
and, similarly,
\begin{equation}\label{e:Bilarge}
  |B_i| \ge |\oB_i| = |\ob_i+\oF| - |(\ob_i+\oF)\stm\oA|
                              = |\oF| - \dfc(\oA,\ob_i+\oF),\quad i\in[1,N].
\end{equation}
Since
 $\phi_\Del(A_i+B_i)=\oA_i+\oB_i\seq\oa_i+\ob_i+\oF\seq(2\oA)\stm(\oA+\oF)$
by the choice of $a_i$ and $b_i$, we have
\begin{equation}\label{e:2101b}
  T = \sum_{i=1}^N
            |\{ s\in 2A\stm A\colon \os\in\oa_i+\ob_i+\oF \}|
                                                 \ge \sum_{i=1}^N |A_i+B_i|.
\end{equation}

By Lemma~\refl{pairs} as applied to the subset $\tA:=(\oA+\oF)/\oF$ of the
quotient group $\overline{G}/\oF$, we can assume that each $\oF$-coset from
$\oA+\oF$ appears among the $2N$ cosets $\oa_1+\oF\longc\ob_N+\oF$ at most
$N$ times, except if  there is a subgroup $\tL\le\oG/\oF$ and an element
$\tc\in\oG/\oF$ with $2\tc\notin\tL$ such that either $\tA=\tL\cup\{\tc\}$,
or $\tA=(\tc+\tL)\cup\{0\}$. In this exceptional situation $A$ meets exactly
two cosets of the subgroup $L=\phi_F^{-1}(\tL)$, while $2A$ meets exactly
three cosets of this subgroup. As a result, we can apply
Proposition~\refp{N=1}, exactly as in the case $N=1$ considered above, to get
$|2A|\ge3\Big(1-\frac1n\Big)|A|$.

Addressing now case i) of Lemma~\refl{pairs}, assume that each $\oF$-coset
from $\oA+\oF$ appears among $\oa_1+\oF\longc\ob_N+\oF$ at most $N$ times.

Since $A_i+B_i$ is contained in an $F$-coset, we have $\pi(A_i+B_i)\le F$,
and since $A_i+B_i$ is finite, $\pi(A_i+B_i)\le H$; as a result,
$\pi(A_i+B_i)\le F\cap H=K$. Consequently, by~\refe{2101b}, Kneser's theorem,
\refe{Ailarge}, and~\refe{Bilarge},
  $$ T \ge 2N|\oF|
         - \sum_{i=1}^N (\dfc(\oA,\oa_i+\oF)+\dfc(\oA,\ob_i+\oF))-|K|N. $$

Recalling that  each $\oF$-coset from $\oA+\oF$ appears at most $N$ times
among $\oa_1+\oF\longc\ob_N+\oF$, we get
  $$ T \ge 2N|\oF| - N\Dfc(\oA,\oF) - |K|N
                     \ge \left( \frac32\,|\oF| - \Dfc(\oA,\oF) \right) N $$
(as $\oK\lneq\oF$ yields $|K|=|\oK|\le\frac12|\oF|$). Therefore,
by~\refe{def}, \refe{2501a}, and the definition of the total deficiency,
\begin{align*}
  T &\ge (N+1)|\oF|-2\Dfc(\oA,\oF)
                    + (N-2)\left( \frac12\,|\oF|-\Dfc(\oA,\oF) \right) \\
    &\ge (N+1)|\oF|-2\Dfc(\oA,\oF) \\
    &=   |\oA| - \Dfc(\oA,\oF) \\
    &=   2|\oA| - |\oA+\oF|.
\end{align*}
As explained above, this leads to a contradiction.

\subsection{The case where $N\ge 2$ and $\oK=\oF$.}
As shown above, in this case $\oF\le\oH$, $F\le H+\Del$, and $F=K\oplus\Del$;
notice that this implies $|\oF|=|\oK|=|K|$.

Let $A^\circ:=A\stm(A_0\cup A_l)$; loosely speaking, $A^\circ$ is the
``middle part'' of $A$.

\begin{claim}\label{m:1101c}
We have $A_0\seq K$ and $A_l\seq\del+K$; that is, each of the sets $A_0$ and
$A_l$ is contained in a single $K$-coset.
\end{claim}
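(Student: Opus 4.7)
The plan begins by reformulating the claim. Since $H\cap\Del=\{0\}$, the restriction $\phi_\Del|_H\colon H\to\oH$ is an injective group homomorphism that maps $K$ bijectively onto $\oK=\oF$. Consequently, the containment $A_0\seq K$ is equivalent to $\oA_0\seq\oF$ inside $\oH$, and $A_l\seq\del+K$ is equivalent to $\o{A_l-\del}=\oA_l\seq\oF$; since $0\in A_0$ and $0\in A_l-\del$, the target $\oF$-coset must be the identity coset $\oF$ itself in each case. Thus the task reduces to showing both $\oA_0$ and $\oA_l$ lie in $\oF$ inside the subgroup $\oH\le\oG$.

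The structural leverage comes from the $\oF$-periodicity of $2\oA$ combined with $\oF\le\oH$. This forces the slice $T:=(2\oA)\cap\oH$ to be $\oF$-periodic inside $\oH$, so it descends to a well-defined subset $\widetilde T$ of the finite quotient $\oH/\oF$, whose cardinality equals the number of $\oF$-cosets of $\oG$ simultaneously contained in $2\oA$ and in $\oH$. Since $\oA_0+\oA_0$, $\oA_0+\oA_l$, and $\oA_l+\oA_l$ all sit inside $(2\oA)\cap\oH$ (the first and third because $\oA_0,\oA_l\seq\oH$, the second because $A_0+A_l\seq (2A)_l$ and $\phi_\Del(A_l)=\oA_l\seq\oH$), their projections $X_0+X_0$, $X_0+X_l$, and $X_l+X_l$ in $\oH/\oF$ (with $X_0,X_l$ the images of $\oA_0,\oA_l$) all lie inside $\widetilde T$.

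I plan to argue by contradiction: suppose $\alp:=|X_0|\ge 2$ or $\bet:=|X_l|\ge 2$, and deduce that $|\widetilde T|$ is forced to be too large. Applying Kneser's theorem (Theorem~\reft{Kneser}) inside the finite abelian group $\oH/\oF$ to the sumset $X_0+X_l$ yields a lower bound of the form $\alp+\bet-|\tL|$ where $\tL:=\pi(X_0+X_l)$, with analogous bounds for $X_0+X_0$ and $X_l+X_l$. Combining these with the global coset count $|2\oA|=(2N+1)|\oF|$ and the deficiency bound $\Dfc(\oA,\oF)\le\tfrac12|\oF|-1$ from \refe{def}, together with the fact that $\oA+\oF\seq 2\oA$ already accounts for $(N+1)|\oF|$ of the total, should produce the contradiction. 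The main obstacle I anticipate is the Kneser nontrivial-period case: when $\tL$ is nontrivial the linear lower bound weakens, and the cleanest remedy is to lift $\tL$ back to a subgroup $L\le H$ strictly containing $K$, verify either $\Dfc(2A,L)\le\Dfc(A,L)$ or $\Dfc(A,L)\le|L|-1$, and invoke the induction framework of Section~\refs{def} with $L$ in place of $K$ to close the argument.
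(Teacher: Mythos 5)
Your reformulation is correct: since $\Del\cap H=\{0\}$, the map $\phi_\Del|_H$ is injective and carries $K$ isomorphically onto $\oK=\oF$, so $A_0\seq K$ and $A_l\seq\del+K$ are equivalent to $\oA_0\seq\oF$ and $\oA_l\seq\oF$, and $0\in\oA_0\cap\oA_l$ (because $0\in A_0$ and $\o\del=0\in\oA_l$). You are also right that $\oA\cap\oH=\oA_0\cup\oA_l$ and that $\oA_0+\oA_0,\,\oA_0+\oA_l,\,\oA_l+\oA_l\seq(2\oA)\cap\oH$. So the framing is fine, and your route through $\oH/\oF$ is genuinely different from the paper's.

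The gap is that your plan produces only \emph{lower} bounds on $|\widetilde T|$ (via Kneser applied to $X_0,X_l$ in $\oH/\oF$), but you never obtain a matching \emph{upper} bound that such lower bounds could violate. The global count $|2\oA|=(2N+1)|\oF|$ and the deficiency bound $\Dfc(\oA,\oF)<\tfrac12|\oF|$ say nothing, by themselves, about how many of the $2N+1$ cosets, or how much of the deficiency, sits inside the slice $\oH$; in particular there is no constraint preventing $X_0\cup X_l$ from spanning several $\oF$-cosets while all the ``new'' cosets of $2\oA$ happen to fall in $\oH$. What is missing is exactly the numerical input the paper supplies: the inequality $|\oH\cap\oA|\le|\oF|$, i.e.\ $\sig-|A^\ast|\le|\oF|$, which is derived (as~\refe{2101c}) by chaining the wraparound estimate $|2A|\ge|2\oA|+|A+A^\ast|$ from~\refe{0101a}, Kneser applied to $2\oA$, the bound $|A+A^\ast|\ge|A|+|A^\ast|-1$ from~\refe{3112}, and the small-doubling consequence $3|A|-|2A|>\sig$ from~\refe{1001a}. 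That last inequality uses $\tau<3(1-\tfrac1n)$ in an essential way, and nothing in your coset-counting plan replaces it. Once one has $|\oH\cap\oA|\le|\oF|$, two distinct $\oF$-cosets in $\oH$ meeting $\oA$ immediately force $\Dfc(\oA,\oF)\ge 2|\oF|-|\oH\cap\oA|\ge|\oF|$, contradicting~\refe{def}; without that bound the deficiency on those two cosets is simply unconstrained. (A secondary, smaller issue: in the Kneser nontrivial-period escape you propose, lifting $\tL\le\oH/\oF$ back gives first a subgroup of $\oG$ containing $\oF$, hence a subgroup of $G$ containing $F\not\le H$; you would need to intersect with $H$, and you would still need to actually verify one of~\refe{0201a} or~\refe{0201b} for the resulting $L$ rather than assert it.)
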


\begin{proof}
From~\refe{0101a}, Kneser's theorem, \refe{3112}, and~\refe{notused}, we have
\begin{multline*}
  |2A| \ge (2|\oA+\oF| - |\oF|) + (|A|+|A^*|-1) \\
                   \ge 2|\oA|-|\oF| + |A|+|A^*|-1 = 3|A| - |A^*| - |\oF| -1.
\end{multline*}
Combining this estimate with~\refe{1001a}, we get
\begin{equation}\label{e:2101c}
  \sig\le |A^*|+|\oF|.
\end{equation}

On the other hand,
  $$ |H\cap(A+\Del)| \ge |\phi_\Del(H\cap(A+\Del))|
                   = |\phi_\Del(H)\cap\phi_\Del(A+\Del)| = |\oH\cap\oA| $$
by Lemma~\refl{2101}. Observing that the left-hand side is
  $$ |(H\cap A) \cup (H\cap(A-\del))| = |A_0\cup(A_l-\del)|
                                                         = \sig - |A^*|, $$
and using~\refe{2101c}, we obtain
\begin{equation*}\label{e:2912a}
  |\oH\cap\oA| \le \sig - |A^*| \le |\oF|.
\end{equation*}

Assuming now for a contradiction that, say, $A_0$ intersects more than one
$K$-coset, fix $a_1,a_2\in A_0$ which are distinct modulo $K$. Since
$\oa_1,\oa_2\in\oH$ are then distinct modulo $\oK=\oF$, in view of~\refe{def}
and the assumption $\oF\le\oH$ we get
\begin{align*}
  \frac12|\oF| &> \Dfc(\oA,\oF) \\
    &\ge \dfc(\oA,\oa_1+\oF) + \dfc(\oA,\oa_2+\oF) \\
    &=   2|\oF| - (|(\oa_1+\oF)\cap\oA|+|(\oa_2+\oF)\cap\oA|) \\
    &\ge 2|\oF| - |(\oH+\oF)\cap\oA| \\
    &=   2|\oF| - |\oH\cap\oA| \\
    &\ge |\oF|,
\end{align*}
the contradiction sought.
\end{proof}

\begin{claim}\label{m:new}
We have $2A^\circ+K=2A^\circ$. Moreover, if $|A_0|\ge|A_l|$, then
$A^\circ+K\seq 2A$, and if $|A_l|\ge|A_0|$, then $A^\circ+\del+K\seq 2A$.
\end{claim}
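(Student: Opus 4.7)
The plan is to exploit the structural relation $2A+K\seq 2A+\Del$, which follows from $2\oA+\oF=2\oA$ combined with $\oK=\oF$ (so $2A+F\seq 2A+\Del$ and $F=K\oplus\Del$ give the containment). I would tackle the moreover part first, then deduce the identity $2A^\circ+K=2A^\circ$.

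For the moreover part, observe that the reflection $R(a):=\del-a$ sends $A$ to $A':=\del-A$, with $A'_0=\del-A_l$, $A'_l=\del-A_0$, $\oA'=-\oA$, $\oF'=\oF$, $K'=K$, and---using $K=-K$---the moreover assertion for $A'$ in the case $|A'_0|\ge|A'_l|$ translates to the assertion for $A$ in the case $|A_l|\ge|A_0|$. So it suffices to assume $|A_0|\ge|A_l|$ and prove $A^\circ+K\seq 2A$. Fix $a\in A^\circ$ with $t:=\psi(a)\in[1,l-1]$ and $k\in K$. Using $a\in 2A$, the key relation yields $a+k=s+j\del$ for some $s\in 2A$; since $\psi(a+k)=t\in[1,l-1]$ and $\psi(2A)\seq[0,2l]$, we have $j\in\{-1,0\}$. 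The case $j=0$ gives $a+k\in 2A$ immediately. For $j=-1$ we have $a+k+\del\in 2A$, and I would exhibit a representation $a+k=a_0+a_t$ with $a_0\in A_0$, $a_t\in A_t\cap(a+K)$. Forcing $a_t\in a+K$ yields $a_0=a+k-a_t\in K$, so the question becomes whether $(a+k-A_t\cap(a+K))\cap A_0\ne\est$ inside $K$; inclusion-exclusion handles this when $|A_t\cap(a+K)|+|A_0|>|K|$. In the residual case one analyzes any representation $a+k+\del=b_1+b_2$ (necessarily $\psi(b_i)\in[t,l]$): if some $b_i\in A_l$, write $b_i=\del+k_i$ with $k_i\in A_l-\del\seq K$, giving $a+k=k_i+b_{3-i}\in A_0+A_t\seq 2A$ provided $k_i\in A^*$. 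The hypothesis $|A_0|\ge|A_l|$ together with $|A^*|\ge|A_0|+|A_l|-|K|$ (from $A_0\cup(A_l-\del)\seq K$) and the deficiency inequality~\refe{def} allow one to always arrange $k_i\in A^*$ by varying the representation; the sub-case where both $b_i\in A^\circ$ is handled by iterating the same analysis.

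For the first identity, the inclusion $2A^\circ\seq 2A^\circ+K$ is trivial. For the reverse, take $s=a_1+a_2\in 2A^\circ$ with $a_i\in A^\circ$ and $k\in K$. By the moreover part, $a_1+k\in 2A$, and (choosing the representation via the inclusion-exclusion argument above) we may write $a_1+k=c_1+c_2$ with $c_1\in A_0$ and $c_2\in A_{\psi(a_1)}\seq A^\circ$. Then $s+k=c_2+(c_1+a_2)$, and by a further application of inclusion-exclusion in $K$ we ensure $c_1+a_2\in A_{\psi(a_2)}\seq A^\circ$, yielding $s+k\in A^\circ+A^\circ=2A^\circ$.

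The main obstacle is the residual case of the moreover part, when $|A_t\cap(a+K)|+|A_0|\le|K|$: excluding the possibility $a+k\notin 2A$ here requires a delicate interplay between the deficiency inequality~\refe{def}, the bound $|A^*|\ge|A_0|+|A_l|-|K|$, and the hypothesis $|A_0|\ge|A_l|$, with several further sub-cases to keep track of.
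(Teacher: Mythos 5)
Your overall plan—prove the ``moreover'' containments first and then derive $2A^\circ+K=2A^\circ$—is genuinely different in structure from the paper, which proves the two assertions independently and in the opposite order. Unfortunately the proposal, as written, has two real gaps.

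First, you explicitly leave the residual case of the ``moreover'' part open, calling it ``the main obstacle.'' That case in fact never arises, and the paper establishes this directly: with $B^\circ:=A\cap(a^\circ+K)$, Claim~\refm{1101c} gives $A\cap F=A_0\cup A_l$, and then~\refe{def} yields
$|A_0|+|A_l|+|B^\circ|>\frac32|K|$ together with $|B^\circ|>\frac12|K|$;
averaging gives $\max\{|A_0|,|A_l|\}+|B^\circ|>|K|$, which is exactly the inclusion-exclusion inequality you need, so the pigeonhole always fires. You name the right ingredients (\refe{def}, the fact that $A_0\cup(A_l-\del)\seq K$), but you never assemble them, so the core of the argument remains a sketch rather than a proof.

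Second, the deduction of $2A^\circ+K=2A^\circ$ from the ``moreover'' part does not close. You want simultaneously $c_1\in A_0$, $c_2=a_1+k-c_1\in A\cap(a_1+K)$, \emph{and} $c_1+a_2\in A\cap(a_2+K)$—a triple intersection inside $K$. Guaranteeing it by inclusion-exclusion would require $|A_0|+|A\cap(a_1+K)|+|A\cap(a_2+K)|>2|K|$, but the deficiency bound only delivers strictly more than $\frac32|K|$ for that sum, so the triple intersection can be empty. The paper avoids this by proving the first assertion directly and independently: for $a_1,a_2\in A^\circ$, each $\oA_i:=\phi_\Del(A\cap(a_i+K))$ misses fewer than $\frac12|\oF|$ elements of its $\oF$-coset by~\refe{def}, so $\oA_1+\oA_2$ fills the whole coset $\oa_1+\oa_2+\oF$, forcing $A\cap(a_1+K)+A\cap(a_2+K)=a_1+a_2+K\seq 2A^\circ$. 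You should keep your (correct and clean) reflection symmetry observation and your use of the relation $2A+K\seq 2A+\Del$, but without the two pieces above the argument is not complete.
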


\begin{proof}
To prove the first assertion, we fix $a_1,a_2\in A^\circ$ and show that
$a_1+a_2+K\seq 2A^\circ$. For $i\in\{1,2\}$, let $A_i:=(a_i+F)\cap A$; notice
that $A_i\seq a_i+F=a_i+K+\Del$ whence, indeed, $A_i\seq a_i+K$. Write
$S:=A_1+A_2\seq a_1+a_2+K$ so that $\oS=\oA_1+\oA_2=\oa_1+\oa_2+\oF$ in view
of~\refe{def}. As a result, $|S|\ge|\oS|=|\oa_1+\oa_2+\oF|=|\oF|=|\oK|=|K|$,
leading to $S=a_1+a_2+K$; thus, $a_1+a_2+K=A_1+A_2\seq 2A^\circ$.

Addressing the second assertion, we fix $a^\circ\in A^\circ$ and show that
then either $a^\circ+K\seq 2A$, or $a^\circ+\del+K\seq 2A$, according to the
relation between $|A_0|$ and $|A_l|$. Write $B_0:=A\cap F$ and
$B^\circ:=A\cap(a^\circ+F)$; equivalently, $B_0=A_0\cup A_l$ by
Claim~\refm{1101c}, and $B^\circ=A\cap(a^\circ+K)$. Letting $S:=B_0+B^\circ$,
in view of $B_0\seq F$ and $B^\circ\seq a^\circ+F$ we have then $S\seq
2A\cap(a^\circ+F)$ and $\oS=\oB_0+\oB^\circ$. Furthermore, from
\begin{align*}
  \dfc(\oA,\oF) &= |\oF|-|\oA\cap\oF| = |\oF| - |\oB_0|
  \intertext{and}
  \dfc(\oA,\oa^\circ+\oF) &= |\oF| - |\oA\cap(\oa^\circ+\oF)|
                                                     = |\oF| - |\oB^\circ|
\end{align*}
recalling~\refe{def} we get
  $$ |\oB_0|+|\oB^\circ| = 2|\oF| - (\dfc(\oA,\oF)+\dfc(\oA,\oa^\circ+\oF))
          \ge 2|\oF| - \Dfc(\oA,\oF) > \frac32\,|\oF|. $$
From $B_0=A_0\cup A_l$ we now derive
  $$ |A_0|+|A_l| + |B^\circ| \ge |B_0|+|B^\circ|
                             \ge |\oB_0| + |\oB^\circ| > \frac32\,|\oF|. $$
Also, we have
  $$ |B^\circ| \ge |\oB^\circ| = |\oA\cap(\oa^\circ+\oF)|
        = |\oF| - \dfc(\oA,\oa^\circ+\oF)
                             \ge |\oF| - \Dfc(\oA,\oF) > \frac12\,|\oF|. $$
Therefore,
  $$ \max\{|A_0|,|A_l|\}+|B^\circ|
           \ge \frac12(|A_0|+|A_l|+|B^\circ|) + \frac12\,|B^\circ|
             > \frac34\,|\oF| + \frac14\,|\oF| = |\oF| = |K|. $$
Since $B^\circ\seq a^\circ+K$, $A_0\seq K$, and $A_l\seq\del+K$, from the
pigeonhole principle we conclude that if $|A_0|>|A_l|$, then
$A_0+B^\circ=a^\circ+K$, while if $|A_l|\ge|A_0|$, then
$A_l+B^\circ=a^\circ+\del+K$. The assertion follows in view of
$A_0+B^\circ\seq 2A$ and $A_l+B^\circ\seq 2A$.
\end{proof}

We can, eventually, complete the proof. Assuming $|A_0|\le|A_l|$ for
definiteness, by Claim~\refm{new} we have $2A^\circ+K\seq 2A$ and also
$A^\circ+A_l+K\seq 2A$; that is, the set $2A$ has zero deficiency on all
$K$-cosets with the possible exception of the cosets contained in $A_0+A+K$;
that is, cosets of the form $a+K$ with $a\in A$. On the other hand, in view
of
  $$ A\cap(a+K) + A_0 \seq 2A\cap(a+K) $$
and $|2A\cap(a+K)|\ge|A\cap(a+K)|$ resulting from it, we have
  $$ \dfc(2A,a+K) \le \dfc(A,a+K). $$
Taking the sum over the elements $a\in A$ representing the $K$-cosets
contained in $A$ we get
  $$ \Dfc(A,K) = \sum_{a} \dfc(A,a+K)
                  \ge \sum_{a} \dfc(2A,a+K) = \Dfc(2A,K). $$
As noticed in Section~\refs{def}, this completes the proof by appealing to
the induction.

\vfill

\bigskip
\end{document}